\documentclass[11pt]{article}
\usepackage{authblk}
\usepackage{currfile}
\usepackage{floatrow}
\usepackage{amsmath,amsthm,verbatim,amssymb,amsfonts,amscd}
\usepackage{graphicx,tikz,caption,subfig}
\usepackage{mathrsfs}
\usepackage{amsmath}

\usepackage{tikz}
\usetikzlibrary{decorations.markings}
\tikzset{>=stealth}
\tikzstyle{vertex}=[circle, draw, inner sep=1pt, minimum size=1pt]
\tikzstyle{directed}=[postaction={decorate,
	decoration={markings,mark=at position 0.3 with {\arrow{stealth}}}
}]

\newcommand{\vertex}{\node[vertex]}

\usepackage{bm}
\usepackage{float}
\usepackage{setspace}
\usetikzlibrary{arrows,shapes,positioning}
\usetikzlibrary{decorations.markings}
\tikzstyle arrowstyle=[scale=0.8]
\tikzstyle directed=[postaction={decorate,decoration={markings, mark=at position 0.55 with {\arrow[arrowstyle]{stealth}}}}]
\tikzstyle redirected=[postaction={decorate,decoration={markings, mark=at position 0.75 with {\arrow[arrowstyle]{stealth reversed}}}}]

\usepackage{xcolor}

\usepackage{enumitem}
\setenumerate[1]{itemsep=0pt,partopsep=0pt,parsep=\parskip,topsep=3pt}
\setitemize[1]{itemsep=0pt,partopsep=0pt,parsep=\parskip,topsep=3pt}
\setdescription{itemsep=0pt,partopsep=0pt,parsep=\parskip,topsep=3pt}

\newtheorem{claim}{Claim}
\newtheorem{theorem}{Theorem}[section]

\newtheorem{definition}[theorem]{Definition}
\newtheorem{conjecture}[theorem]{Conjecture}

\newtheorem{lemma}[theorem]{Lemma}

\makeatletter \@addtoreset{equation}{section} \makeatother

\newcommand{\AMSE}{{\it Acta Math. Sin. (Engl. Ser.)}}

\newcommand{\JCTB}{{\it J. Combin. Theory Ser. B}}

\newcommand{\JGT}{{\it J. Graph Theory}}

\newcommand{\DM}{{\it Discrete Math.}}

\newcommand{\SIAMDM}{{\it SIAM J. Discrete Math.}}

\newcommand{\CPC}{{\it Combin. Probab. Comput.}}
\newcommand{\GC}{{\it Graphs Comb.}}

\newcommand{\JCO}{{\it J. Comb. Optim.}}

\newcommand{\IC}{{\it Inform. Comput.}}

\begin{document}
\title{Signed list edge coloring in graphs of bounded treewidth}

\author{Li Zhang\thanks{Yan'an University, Yan'an, Shaanxi, 716000, China. Email:  lizhang@yau.edu.cn.},
~~You Lu\thanks{School of Mathematics and Statistics, Northwestern Polytechnical University,  Xi'an, Shaanxi 710129, China. Email: luyou@nwpu.edu.cn. Partially supported by National Natural Science Foundation of China (No. 12271438).},
~~Zhengke Miao\thanks{School of Mathematics and Statistics \& Key Laboratory of Analytical Mathematics and Applications (Ministry of Education), Fujian Normal University, Fuzhou, Fujian 350007, China. Email:  zkmiao@jsnu.edu.cn. Partially supported by National Natural Science Foundation of China (No. 12431013).},
~~Yintao Wang\thanks{School of Marine Science and Technology, Northwestern Polytechnical University,  Xi'an, Shaanxi 710072, China. Email: tyaowang@gmail.com. Partially supported by National Natural Science Foundation of China (No. U2141238).}
}

\date{}
\maketitle

\begin{abstract}
Vizing conjectured that the list edge chromatic number of any graph with maximum degree $\Delta$ is at most $\Delta + 1$. This conjecture has been confirmed for several important classes of graphs, in particular, Lang proved that it holds for all graphs of treewidth $3$. In this paper, we introduce the list edge coloring of signed graphs, a framework that generalizes both classical list edge coloring and the signed edge coloring introduced by Behr. We extend Lang's result by proving the signed analogue of Vizing's conjecture for all signed graphs of treewidth $3$, as well as for signed graphs of treewidth $4$ with maximum degree $\Delta \ge 10$.

\noindent\textbf{Keywords:} list edge coloring, signed graph, treewidth
\end{abstract}

\section{Introduction}

Graphs or signed graphs considered in this paper are finite and simple. For terminology and notation not defined here, we follow \cite{BondyMurty2008, CCJST2019, Diestel2010}.

Let $G$ be a graph and $\mathbb{Z}$ be the set of integers. A {\em list edge assignment} on $G$ is a mapping $L$ that assigns to each edge $e\in E(G)$ a {\em list} $L(e)\subseteq \mathbb{Z}$ of colors.  An {\em $L$-edge-coloring} of $G$ is a mapping $\phi: E(G)\to \mathbb{Z}$ such that every edge $e\in E(G)$ receives a color $\phi(e)\in L(e)$ and $\phi(e)\neq \phi(e')$ for any two adjacent edges $e, e'\in E(G)$.
The {\em list edge chromatic number}, denoted by $\chi'_l(G)$, is the smallest  positive integer $k$ such that $G$ admits an $L$-edge-coloring for any list edge assignment $L$ satisfying $|L(e)|\ge k$ for all $e\in E(G)$.

The classical $k$-edge-coloring is a special case of $L$-edge-coloring where all lists are equal to $\{1, 2, \dots, k\}$. Thus $\chi_l'(G)\ge \chi'(G)\ge \Delta$ for any graph $G$, where $\chi'(G)$ denotes the classical edge chromatic number and $\Delta$ is the maximum degree of $G$.
A fundamental result in this area, known as Vizing's theorem and independently proved by Vizing \cite{Vizing1964} and Gupta \cite{Gupta}, states that $\chi'(G)\leq \Delta+1$ for every graph $G$. In 1976, Vizing proposed the following conjecture.

\begin{conjecture}[Vizing \cite{Vizing1976}]\label{con: list edge}
$\chi_l'(G)\leq \Delta+1$ for every graph $G$.
\end{conjecture}

Though still open, Conjecture \ref{con: list edge} has attracted considerable attention. It was initially verified by Vizing \cite{Vizing1976} for graphs with maximum degree at most $3$, a result independently established by Erd\H{o}s, Rubin and Taylor \cite{ERT1980}. Juvan, Mohar and \v{S}krekovski \cite{JMS1998} extended  this result to graphs with maximum degree $4$. Lang  \cite{Lang2016} further confirmed the conjecture for all graphs of treewidth $3$.
For additional related results, we refer the readers to
\cite{MB2015, Bruhn2015, Galvin1995, H1997,  H2021, Meeks2016,  W2018, Z2025}.

Recently, Behr \cite{B2020} generalized the concept of edge coloring in a natural way to signed graphs and established a signed analogue of Vizing's theorem. Building upon Behr's definition, we introduce in this paper the notion of signed list edge coloring.

A {\em signed graph} $(G,\sigma)$ consists of an {\em underlying graph} $G$ and a {\em signature} $\sigma: E(G)\to \{1,-1\}$.
An edge $e\in E(G)$ is {\em positive} if $\sigma(e)=1$ and {\em negative} otherwise.  For a subgraph $(G', \sigma|_{E(G')}) $ (or simply $(G', \sigma)$) of $(G, \sigma)$, we use $E_N(G',\sigma)$ to denote the set of negative edges of $(G', \sigma)$. A cycle is \emph{balanced} if it contains an even number of negative edges, and otherwise \emph{unbalanced}.
Following Bouchet \cite{B1983}, every edge of $G$ is viewed as two \emph{half-edges} $h$ and $\hat{h}$, one incident with each end. For clarity, if $v$ is an end of $e$, we denote by $h^{v}_e$ the
half-edge of $e$ incident with $v$. Let $H(G)$ be the set of all half-edges of $G$,  and $H_G(v)$ be the set of half-edges incident with a vertex $v$. For a half-edge $h \in H(G)$, we write $e_h$ for the edge containing $h$.

Let $\mathbb{Z}$ be the set of integers and let $2^{\mathbb Z}$ denote the family of all subsets of $\mathbb Z$. For any $S\in 2^{\mathbb{Z}}$, define $-S = \{-s : s \in S\}$.
A \emph{list edge assignment} on a signed graph $(G, \sigma)$ is a mapping $L : H(G)\rightarrow 2^{\mathbb{Z}}$ such that
$L(h) = -\sigma(e_h)L(\hat{h})$ for every $h \in H(G)$.

\begin{definition}
 Let $(G,\sigma)$ be a signed graph with a list edge assignment $L$, and $f: E(G)\to \mathbb{Z}^+$ be a function.

 \begin{itemize}
 \item[\rm (1)] An {\em $L$-edge-coloring} of $(G,\sigma)$ is a mapping $\phi : H(G)\rightarrow \mathbb{Z}$ such that $\phi(h) =-\sigma(e_h)\phi(\hat{h}) \in L(h)$ and
 $\phi(h) \neq \phi(h')$ for any adjacent half-edges $h, h'\in H(G)$.

\item[\rm (2)] $(G,\sigma)$ is called {\em $f$-list-edge-colorable} if it admits an $L$-edge-coloring for every list edge assignment $L$
satisfying $|L(h)|\geq f(e_h)$ for all $h \in H(G)$. In particular, $(G, \sigma)$ is {\em $k$-list-edge-colorable} if it is $f$-list-edge-colorable with $f(e) = k$ for every $e \in E(G)$.

\item[\rm (3)] The {\em list edge chromatic number} of $(G,\sigma)$, denoted $\chi_l'(G,\sigma)$, is the smallest positive integer $k$ such that $(G, \sigma)$ is $k$-list-edge-colorable.
 \end{itemize}
\end{definition}

For the sake of convenience, when $e_h$ is negative, write $L(e_h)$ and $\phi(e_h)$ instead of $L(h)$ and $\phi(h)$, respectively.
Observe that signed list edge coloring can be viewed as a unified generalization of both list edge coloring and signed edge coloring.
More precisely,
\begin{itemize}
\item when $\sigma=-{\bf 1}$, i.e., $\sigma(e)=-1$ for all $e\in E(G)$, every $L$-edge-coloring of $(G, \sigma)$ corresponds to an $L$-edge-coloring of its underlying graph $G$;
\item if the list edge assignment $L$ is given by
$$L(h)=M_k=\left\{
\begin{array}{ll}
 \{\pm 1, \pm 2, \dots, \pm \frac{k}{2}\} &  \mbox{ if $k$ is even};\\
 \{0, \pm 1, \pm 2, \dots, \pm \frac{k-1}{2}\} &  \mbox{ if $k$ is odd},
\end{array}
\right.
\  \forall h\in H(G),$$
then any $L$-edge-coloring of $(G,\sigma)$ coincides with a $k$-edge-coloring of $(G,\sigma)$ in the sense of Behr \cite{B2020}.
\end{itemize}
Therefore, we have $\chi'_{l}(G, -{\bf 1})=\chi'_{l}(G)$ and $\chi_{l}'(G,\sigma)\ge \chi'(G,\sigma)$,  where $\chi'(G,\sigma)$ is the edge chromatic number of $(G,\sigma)$.

In this paper, we verify the signed version of Conjecture \ref{con: list edge} for signed graphs of treewidth $3$, or of treewidth $4$ with maximum degree at least $10$, thereby improving a result of Lang \cite{Lang2016}.

\begin{theorem}\label{maintheorem}
Let $G$ be a graph of treewidth $3$, or of treewidth $4$ with $\Delta\geq10$. Then for any signature $\sigma$, $\chi'_{l}(G,\sigma)\leq\Delta+1$.
\end{theorem}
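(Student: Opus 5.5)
We argue by minimal counterexample, following the structure of Lang's proof for treewidth $3$. Let $(G,\sigma)$ be a counterexample with $|V(G)|+|E(G)|$ minimum, and let $L$ be a list edge assignment with $|L(h)|\ge \Delta+1$ for every half-edge $h$ such that $(G,\sigma)$ has no $L$-edge-coloring. Here $G$ has treewidth at most $3$, or treewidth at most $4$ with $\Delta\ge 10$; subgraphs have treewidth at most that of $G$ and maximum degree at most $\Delta$, and lists of size $\Delta+1$ are what we use throughout.

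\textbf{Reducibility tool.} Deleting an edge or a set of edges gives a smaller signed graph, so it has an $L$-edge-coloring. The key observation is that extending a partial coloring to an uncolored edge $e=uv$ works exactly as in the unsigned case. A color $c\in L(h^u_e)$ is forbidden if $c$ appears on a half-edge at $u$, or if $-\sigma(e)c$ appears on a half-edge at $v$. Since $c\mapsto -\sigma(e)c$ is a bijection from $L(h^u_e)$ onto $L(h^v_e)$, the number of forbidden colors is at most the number of colored edges at $u$ plus the number of colored edges at $v$.

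This has two consequences.
\begin{itemize}
\item Recoloring a set $F$ of edges reduces to an ordinary list edge coloring problem on the line graph of $F$. The list of $e\in F$ is $L(h^u_e)$ minus the colors forbidden by the colored half-edges at both ends. Adjacency between $e,e'\in F$ sharing a vertex $w$ becomes a constraint between $\phi(h^w_e)$ and $\phi(h^w_{e'})$. After pulling back through the bijections $L(h^w_e)\to L(h^u_e)$, this is a ``one color excluded per pair'' constraint, i.e.\ a DP-coloring (correspondence coloring) instance on $L(F)$.
\item Reducible configurations in the unsigned setting that are proved by greedy arguments, degeneracy, or Alon--Tarsi/kernel arguments on bipartite pieces must therefore be rechecked in the DP/correspondence setting. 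Greedy and degeneracy arguments transfer verbatim. Galvin- or kernel-type arguments do not transfer automatically.
\end{itemize}

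\textbf{Structure.} Next we would import the structural lemma for graphs of treewidth $k$. Take a smooth tree decomposition and look at a leaf bag. This yields a set of low-degree vertices whose neighbourhoods lie in a small set of at most $k$ high-degree vertices. In Lang's terms, we get a vertex of degree at most $k$ adjacent to a bounded number of vertices, together with further low-degree vertices attached to the same $k$-clique separator. We then prove a list of reducible configurations, each one showing a certain configuration cannot occur in the minimal counterexample:
\begin{itemize}
\item an edge $uv$ with $d(u)+d(v)\le \Delta+1$, which is reducible by the counting above;
\item a vertex of degree $2$ or $3$ whose neighbours satisfy appropriate degree sums;
\item ``fans'' of several low-degree vertices sharing the same neighbours in the separator.
\end{itemize}
For these we delete all edges at the low-degree vertices, color the rest, and color the deleted edges using the residual list sizes. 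The residual line graph is a small multigraph-like structure (paths, cycles, $K_4$-minus-edges, or a $K_{2,m}$-type structure), and we argue it is DP-colorable with the given list sizes by degeneracy plus a final exchange argument.

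\textbf{Main obstacle.} We expect the main difficulty to be the configurations where the unsigned proof relies on even cycles being $2$-choosable, or on bipartite kernel arguments. DP-coloring can fail on even cycles with lists of size $2$ because of ``twisted'' correspondences, and here the twist comes precisely from unbalanced cycles. For these cases we would first try to gain one extra unit of slack somewhere: pick the coloring of the rest so that some uncolored edge sees a repeated color among its constraints, or so that two constraints coincide. If that fails, we switch signature at a vertex (which preserves colorability up to negation of lists) to normalize the cycle, and then apply an explicit case analysis on balanced versus unbalanced cycles. For treewidth $4$, the hypothesis $\Delta\ge 10$ gives enough slack that low-degree vertices adjacent to the $4$ separator vertices can be handled greedily. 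So the delicate work is concentrated in the treewidth-$3$ case with small $\Delta$.
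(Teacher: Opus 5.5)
\textbf{There is a genuine gap: your plan never names its reducible configurations, and its one concrete claim about the treewidth-$4$ case is false.}

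In the treewidth-$4$ case the structural lemma leaves you with vertices $w\in W$ of degree $3$ or $4$, with neighbours in $\{x\}\cup U$. In the worst case every vertex of $\{x\}\cup U$ has degree $\Delta$. Take any subgraph $Q$ built from edges $wu$ with $w\in W$ and $u\in\{x\}\cup U$. The residual list size is
\[
f_Q(wu)=\Delta+1-d_{\bar Q}(u)-d_{\bar Q}(w)=1+d_Q(u)+d_Q(w)-d_G(w),
\]
while $wu$ has $d_Q(u)+d_Q(w)-2$ neighbours in the line graph of $Q$. So every edge has at least as many neighbours as residual colors. It has equally many when $d_G(w)=3$, and one more neighbour than colors when $d_G(w)=4$. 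The latter is the typical case, since $|W_3|\le 2$. Hence no greedy or degeneracy ordering can guarantee even the last edge. A Brooks-type exchange cannot help at degree-$4$ vertices either, because there the lists are below the degree.

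The hypothesis $\Delta\ge 10$ gives no slack here. The paper uses it only in a pigeonhole argument ($d(x)\ge\Delta-1\ge 9$, $|U|\le 4$, $|W_3|\le 2$) to force a copy of $K_{3,3}$ in the bipartite graph $E(W,\{x\}\cup U)$. In that copy every edge keeps a list of only $3$ colors, and its line graph is $4$-regular. Coloring it needs a signed analogue of Galvin's theorem: every signed $K_{3,3}$ is $3$-list-edge-colorable (Lemma~\ref{le: K33}). This is exactly the kernel-type statement you yourself say does not transfer to the DP setting, and you offer no substitute. So your assessment is reversed: the heavy work is in treewidth $4$, not treewidth $3$.

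\textbf{The missing idea is the paper's polynomial method adapted to signs.} It uses the polynomial $P_{(G,\sigma)}$ with $x_{\hat h}=-\sigma(e_h)x_h$ together with the Combinatorial Nullstellensatz (Lemma~\ref{CN}). For $K_{3,3}$ this is combined with a case analysis resting on the characterization of non-colorable even cycles with $2$-lists: such a cycle is unbalanced and every list equals $\{a,-a\}$ (Lemma~\ref{le: cycle-2}).

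The treewidth-$3$ case is also not settled by ``gaining slack or switching.'' The paper needs concrete reducible configurations:
\begin{itemize}
\item an alternating cycle between $V_3(G)$ and $V_\Delta(G)$;
\item a $4$-cycle with a chord.
\end{itemize}
Both are reduced via Lemma~\ref{le: cycle+e0}, a Nullstellensatz computation on a cycle with a pendant edge or chord, each edge $xy$ carrying lists of size $d_Q(x)+d_Q(y)-2$. A separate balanced-cycle argument rules out the cubic case. Until you specify configurations and prove them by a non-greedy method of this kind, the proposal is an outline rather than a proof.

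A minor point: your own counting shows that edges with $d(u)+d(v)\le\Delta+2$ are reducible, not just those with $d(u)+d(v)\le\Delta+1$. You need this stronger form to exclude vertices of degree at most $2$ in $W$.
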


The remainder of this paper is organized as follows. In Section~\ref{s2: prelim}, we introduce the structure of graphs with given treewidth, and present a polynomial method for signed list edge coloring. Section~\ref{s3: special} investigates list edge colorings of several specific signed graphs. Section~\ref{s4: proof} completes the proof of Theorem \ref{maintheorem}.

\section{Preliminaries}\label{s2: prelim}

Let $G$ be a graph. For a vertex $v \in V(G)$, we use $d(v)=d_G(v)$ and $N(v)= N_G(v)$ to denote the degree and the neighbors, respectively, of $v$ in $G$. Let $V_\ell(G)=\{v\in V(G) : d(v)=\ell\}$ and $N(W)=\cup_{v\in W}N(v)$ for $W\subseteq V(G)$.
A {\em tree decomposition} $(T,\mathcal{B})$ of $G$ consists of a tree $T$ and a collection $\mathcal{B} =\{B_t : t\in V(T)\}$ of {\em bags} $B_t \subseteq V(G)$ such that
 \begin{itemize}
 \item $V(G) =\cup_{t\in V(T)}B_t$;
 \item for each $vw\in E(G)$ there exists a vertex $t \in V(T)$ such that $v, w\in B_t$;
\item  for each $v\in V(G)$ the subgraph induced by $\{t \in V(T) : v \in B_t\}$ is a subtree of $T$.
\end{itemize}
The {\em width} of a tree decomposition  $(T,\mathcal{B})$  is $\max_{t\in V(T)}  |B_t|-1$. The {\em treewidth} $tw(G)$ of $G$ is the minimum width over all tree decompositions of $G$.

\begin{lemma}[Han et al. \cite{H2008}]\label{lem:tw}
Let $G$ be a graph with treewidth $\ell $ and $\Delta(G)\geq \ell +1$. Then there are
 two non-empty disjoint subsets $W,U \subseteq V(G)$ and a vertex $x \notin W\cup U$ satisfying the following (see Fig. \ref{fig: treewidth}):
 \begin{itemize}
\item[\rm (1)]  $N(W) \subseteq U \cup \{x\}\cup (\cup_{i\leq \ell}V_{i}(G))$;
\item[\rm (2)] $d(x) \geq \ell+1$ and $d(w)\leq \ell$ for each $w \in W$;
\item[\rm (3)]  $W \subseteq N(x) \subseteq W \cup U$;
\item[\rm (4)]  $|U|\leq\ell$.
 \end{itemize}
\end{lemma}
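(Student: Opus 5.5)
We argue from a tree decomposition of minimum width. It is convenient to assume $G$ connected; otherwise we pass to a component that contains a vertex of degree at least $\ell+1$ (such a component has treewidth at most $\ell$, and if its treewidth is smaller we apply the same argument with smaller parameters, since the conclusions are monotone in $\ell$ as long as $\Delta\ge \ell+1$). Fix a tree decomposition $(T,\mathcal{B})$ of width at most $\ell$, and normalize it so that no bag is contained in an adjacent bag.

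Call a vertex \emph{big} if $d(v)\ge \ell+1$ and \emph{small} otherwise; big vertices exist because $\Delta\ge\ell+1$. Delete all small vertices to obtain $G'$, together with the induced decomposition $B'_t=B_t\setminus\{\text{small}\}$, and discard empty bags. Root $T$ and choose a node $t$ of maximum depth among those whose bag contains a big vertex that appears in no bag strictly above $t$. Equivalently, we take a deepest ``top node'' of some big vertex $x$: $t$ is the highest node of the subtree $T_x$, and among all big vertices we pick $x$ so that this node is as deep as possible.

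Let $T_t$ be the subtree rooted at $t$. Define $W$ to be the set of small neighbours of $x$ that appear only in bags of $T_t$, and let $U=B_t\setminus\{x\}$, so that $|U|\le \ell$ and (4) holds. Now consider a neighbour $y$ of $x$. Since $x$ lives only in $T_t$, the edge $xy$ lies in some bag of $T_t$. If $y$ also appears outside $T_t$, then by connectivity of $T_y$ we get $y\in B_t$, so $y\in U$. If instead $y$ lives only inside $T_t$ and is big, then the top node of $y$ lies at or below $t$; by the maximality of depth, this top node is $t$ itself, and hence again $y\in B_t$. Every other neighbour is a small vertex living inside $T_t$, and so belongs to $W$. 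This gives $N(x)\subseteq W\cup U$, and $W\subseteq N(x)$ holds by definition, so (3) holds.

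For (1), take $w\in W$ and a neighbour $z$ of $w$. If $z$ is small, then $z\in V_i$ with $i\le\ell$. If $z$ is big, the edge $wz$ lies in a bag of $T_t$, so $z$ meets $T_t$. Either $z$ exits $T_t$, in which case $z\in B_t$, or $z$'s top node is inside $T_t$; by the depth choice that top node is $t$, so again $z\in B_t$. Thus $z\in U\cup\{x\}$. Condition (2) holds by construction. Disjointness of $W$ and $U$ is handled by removing from $U$ any small vertices, which do not hurt (1), since small vertices are allowed in $V_i$ anyway.

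The main obstacle is showing that $W$ and $U$ are nonempty. If $W$ were empty, then all of $x$'s at least $\ell+1$ neighbours would lie in $B_t\setminus\{x\}$. That set has at most $\ell$ elements, which is a contradiction, so in fact $|W|\ge 1$. For $U$ to be nonempty, pick any neighbour of $x$ lying in $B_t$. Here $x$ is connected to the rest of the graph, and the part outside $T_t$ must attach through $B_t$; if $B_t=\{x\}$ and there is no neighbour there, we can instead add an arbitrary vertex, or rechoose the decomposition using connectivity. This detail needs a careful check in the write-up.
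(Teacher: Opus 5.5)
The paper does not prove this lemma (it is quoted from Han et al.), so your argument has to stand on its own. Its core is right. Choosing a big vertex $x$ whose top node $t$ is as deep as possible forces every big vertex that meets $T_t$ into $B_t$. Your derivations of $N(x)\subseteq W\cup(B_t\setminus\{x\})$, of (1), and of $W\neq\emptyset$ are correct.

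The gap is the last step: making $W$ and $U$ disjoint while keeping $U\neq\emptyset$.
\begin{itemize}
\item[(i)] Your disjointness repair, deleting all small vertices from $U$, can break (3). A small neighbour of $x$ that also occurs in a bag outside $T_t$ lies in $B_t$ but not in $W$, so after the deletion it lies in neither set.
\item[(ii)] You leave $U\neq\emptyset$ open, and your remedy of adding an arbitrary vertex can fail. Take the star $K_{1,n}$ ($\ell=1$, $n\ge 2$) with bags $\{x,y_i\}$. The centre $x$ lies in every bag, so $t$ is the root and every leaf is in $W$. Your repaired $U$ is then empty, and since $V(G)=W\cup\{x\}$ there is no vertex to ``add''. ``Re-choose the decomposition'' is not yet an argument.
\end{itemize}

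Both points close easily.
\begin{itemize}
\item[(i)] Delete only the overlap, and delete it from $W$. Put $U=B_t\setminus\{x\}$ and $W'=W\setminus B_t$. Then $N(W')\subseteq N(W)$ gives (1), $W'\subseteq N(x)\subseteq W'\cup U$ gives (3), and $|W'|\ge d(x)-|U|\ge 1$.
\item[(ii)] For $U\neq\emptyset$, use the normalization you set up but never invoked. Suppose $B_t=\{x\}$. A neighbour $y$ of $x$ shares a bag $B_s$ with $x$ for some $s\in T_t\setminus\{t\}$. Then $x$ lies in the child $t'$ of $t$ towards $s$, so $B_t\subseteq B_{t'}$, a contradiction.

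Alternatively, if $U=\emptyset$ then $W=N(x)$ has $d(x)\ge\ell+1\ge 2$ elements (note $\ell\ge 1$ because $G$ has an edge). Moving one vertex of $W$ into $U$ preserves (1)--(4).
\end{itemize}

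Finally, drop the reduction to a component. Its justification is false: the conclusions are not monotone in $\ell$, since $d(x)\ge\ell'+1$ for $\ell'<\ell$ does not give $d(x)\ge\ell+1$. It is also unnecessary, because your argument only uses a decomposition of width at most $\ell$ and never uses connectivity or the graph $G'$.
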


\begin{figure}[htb]
\scriptsize
\captionsetup[subfloat]{labelsep=none, format=plain, labelformat=empty}
\begin{center}
\begin{tikzpicture}[scale=1.6]
{
\draw (0,0) ellipse (1cm and 0.3cm);

\draw (3,0) ellipse (1cm and 0.3cm);

\draw (3,-1) ellipse (1cm and 0.3cm);

\node [left] at (-1,0) {\tiny $U$};
\node [left] at (4.3,0) {\tiny $W$};
\node [left] at (5.8,-1) {\tiny $(\cup_{i\leq\ell}V_i(G))\setminus (U\cup W)$};
\vertex (x) at(1.5,1)[label=center:{}]{$x$};
\vertex (0) at(-0.8,0)[label=center:{}]{};
\vertex (1) at(0,0)[label=center:{}] {};
\node at (-0.4,0) {$\cdots$};
\vertex (2) at(0.8,0)[label=center:{}] {};
\node at (0.4,0) {$\cdots$};
\vertex (3) at(2.2,0)[label=center:{}] {};
\vertex (4) at(3,0)[label=center:{}] {};
\node at (2.6,0) {$\cdots$};
\vertex (5) at(3.8,0)[label=center:{}] {};
\node at (3.4,0) {$\cdots$};
\vertex (6) at(2.2,-1)[label=center:{}] {};
\vertex (7) at(3,-1)[label=center:{}] {};
\node at (2.6,-1) {$\cdots$};
\node at (3.4,-1) {$\cdots$};
\vertex (8) at(3.8,-1)[label=center:{}] {};
\draw  (0)--(x)--(2) (x)--(3) (4)--(x)--(5);
\draw(0) .. controls (0.5,-0.5)  and (2,-0.5).. (4);
\draw (2) .. controls (1.5,-0.3) and (2.3,-0.3) .. (4);
\draw  [dashed](6)--(3)--(7) (6)--(4)--(7) (4)--(8);
\draw (6)--(5)--(7) (5)--(8);

\fill[black] (-1,-0.3) circle (0.15pt);
\fill[black] (-0.7,-0.3) circle (0.15pt);
\draw (-1,-0.3) --(0)-- (-0.7,-0.3);
\fill[black] (0.6,-0.3) circle (0.15pt);
\fill[black] (1,-0.3) circle (0.15pt);
\draw (0.6,-0.3) --(2)-- (1,-0.3);

}
\end{tikzpicture}
\end{center}
\caption{Structure of the graph $G$ in Lemma \ref{lem:tw}.}
\label{fig: treewidth}
\end{figure}

We apply Alon's celebrated Combinatorial Nullstellensatz \cite{Alon1992} to establish a sufficient condition for the existence of an $L$-edge-coloring of a signed graph.

\begin{theorem}[Combinatorial Nullstellensatz, Alon~\cite{Alon1992}]\label{Alon}
Let $\mathbb{F}$ be an arbitrary field and let $P\in \mathbb{F}[x_1, x_2, \ldots, x_m]$ be a polynomial of degree $\deg(P) = \sum_{j=1}^m i_j$, where each $i_j$ is a nonnegative integer. If the coefficient of the monomial $x_1^{i_1} x_2^{i_2} \cdots x_m^{i_m}$ in $P$ is nonzero, and if $S_1, S_2, \ldots, S_m$ are subsets of $\mathbb{F}$ with $|S_j| > i_j$ for each $j$, then there exist $s_1 \in S_1, s_2 \in S_2, \ldots, s_m \in S_m$ such that $P(s_1, s_2, \ldots, s_m) \neq 0$.
\end{theorem}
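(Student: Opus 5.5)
This statement is Alon's Combinatorial Nullstellensatz, quoted from \cite{Alon1992}, so in the paper it is used without proof; still, here is how I would prove it. The plan is induction on $\deg(P)$ (equivalently, on $\sum_j i_j$), reducing $P$ modulo polynomials that vanish on the grid $S_1\times\cdots\times S_m$. Since shrinking the sets keeps the hypothesis $|S_j|>i_j$, I first replace each $S_j$ by a subset of size exactly $i_j+1$. I then set
\[
g_j(x_j)=\prod_{s\in S_j}(x_j-s)=x_j^{i_j+1}-\sum_{k\le i_j} b_{jk}x_j^k .
\]

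Suppose, for contradiction, that $P$ vanishes at every point of $S_1\times\cdots\times S_m$. In every monomial of $P$ with $x_j$-degree larger than $i_j$, I repeatedly replace $x_j^{i_j+1}$ by $\sum_k b_{jk}x_j^k$. On the grid this replacement is exact, since $g_j(s_j)=0$. The result is a polynomial $\bar P$ with $\deg_{x_j}\bar P\le i_j$ for every $j$, and $\bar P$ agrees with $P$ on the grid.

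The key observation is that this reduction does not change the coefficient of $x_1^{i_1}\cdots x_m^{i_m}$. Each replacement strictly lowers the total degree of the monomials it touches. A monomial of $P$ of total degree at most $\deg P$ that exceeds $i_j$ in some variable is not the target monomial. Any target-degree term it produced would therefore have total degree strictly less than $\deg P=\sum_j i_j$, which is impossible. Hence $\bar P$ has the same nonzero coefficient on $x_1^{i_1}\cdots x_m^{i_m}$ as $P$, and in particular $\bar P\neq 0$.

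Finally I use the standard lemma that a polynomial with $\deg_{x_j}\le i_j$ which vanishes on a grid with $|S_j|=i_j+1$ must be identically zero. I would prove this by induction on $m$. Write $\bar P=\sum_{k\le i_m} Q_k(x_1,\dots,x_{m-1})x_m^k$. For each fixed $(s_1,\dots,s_{m-1})$, the resulting univariate polynomial in $x_m$ has degree at most $i_m$ but $i_m+1$ roots, so all the $Q_k(s_1,\dots,s_{m-1})$ vanish; the induction hypothesis then gives $Q_k\equiv0$. This contradicts $\bar P\neq0$. The only delicate step is the degree bookkeeping in the reduction, which I handle by doing the replacements monomial by monomial and checking that total degree strictly drops each time.
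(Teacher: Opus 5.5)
The paper gives no proof of this statement; it quotes the theorem from Alon and uses it only as a black box in Lemma~\ref{CN}. Your argument is correct and is essentially Alon's original proof. You reduce $P$ modulo $g_j(x_j)=\prod_{s\in S_j}(x_j-s)$, observe that every replacement strictly lowers total degree so the coefficient of $x_1^{i_1}\cdots x_m^{i_m}$ survives in $\bar P$, and then invoke the grid-vanishing lemma. The only difference is that you track this coefficient directly, instead of first stating Alon's ideal-membership form $P=\sum_j h_jg_j$ with $\deg h_j\le \deg P-\deg g_j$ and then examining its top-degree terms.
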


Recall that, for a half-edge $h$, $e_h$ denotes the edge containing $h$, and $\hat h$ denotes the other half-edge of $e_h$. Let $(G,\sigma)$ be a signed graph with vertex set
$V(G)=\{v_1,v_2,\dots,v_n\}$ and edge set
$E(G)=\{e_1,e_2,\dots,e_m\}$. Denote
$$\widetilde H(G)=\{h_e^{v_i}:e=v_iv_j\in E(G), i<j\}.$$
Clearly,
$$H(G)\setminus \widetilde H(G)=\{\hat h:h\in \widetilde H(G)\}.$$
For each $h\in \widetilde H(G)$, we assign an independent variable $x_h$. The variable corresponding to the other half-edge $\hat h$ of the same
edge $e_h$ is determined by the sign of $e_h$ as
\begin{equation}\label{eq: relation}
x_{\hat h}=-\sigma(e_h)x_h.
\end{equation}
Thus, if $e_h$ is negative, then $x_{\hat h}=x_h$; if $e_h$ is positive,
then $x_{\hat h}=-x_h$.
We now define the polynomial $P_{(G,\sigma)}$ as follows:
\begin{equation}\label{eq: polynomial1}
P_{(G,\sigma)}
=
P_{(G,\sigma)}(\{x_h:h\in \widetilde H(G)\})
=
\prod_{i=1}^{n}
\left(
\prod_{\substack{h,h'\in H_G(v_i)\\ 1\le I(h)<I(h')\le m}}
(x_h-x_{h'})
\right),
\end{equation}
where $I(h)=j$ if $e_h=e_j\in E(G)$. As an illustration, consider the signed graph $(G_0,\sigma_0)$ shown in Fig.~\ref{fig:cycle-polynomial}. Assign variables $x_1, x_2, \dots, x_7$ to the half-edges in
$$\widetilde{H}(G_0)=\{h_{e_1}^{1}, h_{e_2}^{2},h_{e_3}^{3},h_{e_4}^{4},h_{e_5}^{5},h_{e_6}^{1},h_{e_7}^{6}\},$$ respectively. Then, by applying Eqs.~(\ref{eq: relation}) and (\ref{eq: polynomial1}), we have
$$
P_{(G_0,\sigma_0)}=(x_1-x_6)\cdot (x_1-x_2)\cdot (-x_2-x_3)\cdot (x_3-x_4)\cdot (-x_4-x_5)\cdot (x_5+x_6)(x_5-x_7)(-x_6-x_7).
$$

\begin{figure}[htb]
\scriptsize
\captionsetup[subfloat]{labelsep=none, format=plain, labelformat=empty}
\begin{center}
\vspace{-0.35cm}
\begin{tikzpicture}
{
\vertex (1) at (180:2) [label=center:{}] {$1$};
\vertex (2) at (120:2) [label=center:{}] {$2$};
\vertex (3) at ( 60:2) [label=center:{}] {$3$};
\vertex (4) at (  0:2) [label=center:{}] {$4$};
\vertex (5) at (300:2) [label=center:{}] {$5$};
\vertex (6) at (240:2) [label=center:{}] {$6$};
\vertex (7) at (210:3.464) [label=center:{}] {$7$};

\draw[dotted,line width=0.8pt]        (1)--(2);
\draw[line width=0.8pt] (2)--(3);
\draw[dotted,line width=0.8pt]        (3)--(4);
\draw[line width=0.8pt] (4)--(5);
\draw[dotted,line width=0.8pt]        (5)--(6);
\draw[line width=0.8pt]        (7)--(6);
\draw[line width=0.8pt] (6)--(1);

\node at (135:2) {$x_1$};
\node at (165:2) {$x_1$};

\node at (105:2) {$x_2$};
\node at (75:2) {$-x_2$};

\node at (45:2) {$x_3$};
\node at (15:2) {$x_3$};

\node at (345:2) {$x_4$};
\node at (316:2.1) {$-x_4$};

\node at (285:2) {$x_5$};
\node at (255:2) {$x_5$};

\node at (224:2.06) {$-x_6$};
\node at (195:2) {$x_6$};

\node at (217:3.12) {$-x_7$};
\node at (233:2.35) {$x_7$};

\node at (150:1.5) {$e_1$};
\node at (90:1.5) {$e_2$};

\node at (30:1.5) {$e_3$};
\node at (330:1.5) {$e_4$};

\node at (270:1.5) {$e_5$};
\node at (210:1.5) {$e_6$};
\node at (218:2.6) {$e_7$};

}
\end{tikzpicture}
\vspace{-0.35cm}
\end{center}
\caption{An example of the polynomial construction for a signed graph
\((G_0,\sigma_0)\), where
\(V(G_0)=\{1,2,\ldots,7\}\),
\(E(G_0)=\{e_1,e_2,\ldots,e_7\}\), and
\(E_N(G_0)=\{e_1,e_3,e_5\}\).
Solid edges are positive and dotted edges are negative.}
\label{fig:cycle-polynomial}
\end{figure}

\begin{lemma}\label{CN}
Let $(G, \sigma)$ be a signed graph with $V(G) = \{v_1, v_2, \dots, v_n\}$ and $E(G) = \{e_1, e_2, \ldots, e_m\}$. Suppose the coefficient of the monomial $\prod_{h \in \widetilde{H}(G)} x_h^{i_h}$ in $P_{(G, \sigma)}$ is nonzero and $\sum_{h \in \widetilde{H}(G)} i_h = \deg(P_{(G, \sigma)})$. Then for any function $f : E(G) \to \mathbb{Z}^+$ satisfying
$$f(e_h) \geq i_h +1,\ \forall h \in \widetilde{H}(G),$$
 $(G, \sigma)$ is $f$-list-edge-colorable.
 \end{lemma}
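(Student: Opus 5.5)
The plan is to apply Theorem~\ref{Alon} directly, with one variable per half-edge in $\widetilde H(G)$. Fix a list edge assignment $L$ on $(G,\sigma)$ with $|L(h)|\ge f(e_h)$ for all $h\in H(G)$. For each $h\in\widetilde H(G)$ set $S_h=L(h)\subseteq\mathbb{Z}\subseteq\mathbb{Q}$ and work over $\mathbb{F}=\mathbb{Q}$. Then $|S_h|\ge f(e_h)\ge i_h+1>i_h$. Together with the hypotheses that the coefficient of $\prod_{h} x_h^{i_h}$ in $P_{(G,\sigma)}$ is nonzero and that $\sum_h i_h=\deg(P_{(G,\sigma)})$, this is exactly what Theorem~\ref{Alon} requires. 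We obtain values $s_h\in L(h)$, $h\in\widetilde H(G)$, with $P_{(G,\sigma)}(\{s_h\})\neq 0$.

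Next define $\phi$ on all of $H(G)$. For $h\in\widetilde H(G)$ put $\phi(h)=s_h$, and put $\phi(\hat h)=-\sigma(e_h)s_h$, which mirrors the substitution (\ref{eq: relation}). We then check the two requirements of an $L$-edge-coloring.

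\textbf{Lists and sign rule.} Since $\sigma(e_h)^2=1$, the relation $\phi(\hat h)=-\sigma(e_h)\phi(h)$ is equivalent to $\phi(h)=-\sigma(e_h)\phi(\hat h)$, so the rule holds for both half-edges of every edge. For $h\in\widetilde H(G)$ we have $\phi(h)=s_h\in L(h)$. For the other half-edge, the defining property $L(\hat h)=-\sigma(e_h)L(h)$ of a list assignment gives $\phi(\hat h)=-\sigma(e_h)s_h\in -\sigma(e_h)L(h)=L(\hat h)$.

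\textbf{Properness.} Two half-edges are adjacent exactly when they are incident with a common vertex $v_i$ and belong to distinct edges. Each such pair $h,h'\in H_G(v_i)$ contributes a factor $(x_h-x_{h'})$ to (\ref{eq: polynomial1}), where $x_h$ and $x_{h'}$ stand for the corresponding $\pm$ variables. Evaluating at $x_h=s_h$ turns every such expression into $\phi(h)-\phi(h')$. Since the whole product is nonzero, every factor is nonzero, so $\phi(h)\ne\phi(h')$ for all adjacent half-edges. Hence $\phi$ is an $L$-edge-coloring.

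There is no real obstacle here. The only point needing care is the bookkeeping: the factors of $P_{(G,\sigma)}$ are written in terms of $x_{\hat h}$ via (\ref{eq: relation}), and evaluation must commute with that substitution. It does, because (\ref{eq: relation}) is a linear substitution and $\phi$ was defined by the same rule. Since $L$ was arbitrary, $(G,\sigma)$ is $f$-list-edge-colorable.
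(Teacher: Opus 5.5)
Your proposal is correct and follows essentially the same route as the paper: apply the Combinatorial Nullstellensatz with $S_h=L(h)$ for $h\in\widetilde H(G)$, extend to the other half-edges by $\phi(\hat h)=-\sigma(e_h)\phi(h)$ using $L(\hat h)=-\sigma(e_h)L(h)$, and deduce properness from the nonvanishing of every factor $(x_h-x_{h'})$. Your extra checks (working over $\mathbb{Q}$, the sign rule holding in both directions, and evaluation commuting with the substitution) only spell out details the paper leaves implicit.
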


\begin{proof}
Let $L$ be an arbitrary list edge assignment on $(G,\sigma)$ such that
$|L(h)|\ge f(e_h)$ for each $h\in \widetilde{H}(G)$. Since $|L(h)|\ge f(e_h)\ge i_h+1$ for every $h\in \widetilde{H}(G)$, it follows from Theorem~\ref{Alon} that there exist colors $a_h\in L(h)$ for all $h\in \widetilde H(G)$ such that
\begin{equation}\label{eq: color polynomial}
P_{(G,\sigma)}(\{a_h : h\in \widetilde H(G)\})\neq 0.
\end{equation}

Define a mapping $\phi$ on $H(G)$ by $\phi(h)=a_h$ and $\phi(\hat h)=-\sigma(e_h)a_h$ for each $h\in \widetilde H(G)$.
Because $L$ is a list edge assignment on $(G,\sigma)$, we have $L(\hat h)=-\sigma(e_h)L(h)$, and thus
$\phi(\hat h)\in L(\hat h)$.
Hence, by Eq.~(\ref{eq: color polynomial}), any two half-edges $h_1$ and $h_2$ incident with the same vertex receive distinct colors $\phi(h_1)\in L(h_1)$ and $\phi(h_2)\in L(h_2)$. This implies that $\phi$ is an $L$-edge-coloring of $(G,\sigma)$.
Since $L$ is arbitrary, $(G,\sigma)$ is $f$-list-edge-colorable.
\end{proof}

In a signed graph, \emph{switching} at a vertex means reversing the signs of all edges incident with it. Two signed graphs are \emph{switching equivalent} if one can be obtained from the other by a sequence of such switchings.  The following result shows that the list edge chromatic number is invariant under switching equivalence.

\begin{lemma}\label{switching}
Let $(G, \sigma)$ and $(G, \sigma')$ be switching equivalent signed graphs. For any function $f : E(G)\rightarrow \mathbb{Z}^+$, $(G, \sigma)$ is $f$-list-edge-colorable if and only if  $(G, \sigma')$ is $f$-list-edge-colorable.
Consequently, $\chi_{l}'(G,\sigma)=\chi_{l}'(G,\sigma')$.
\end{lemma}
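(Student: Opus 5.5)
It suffices to handle a single switching: switching equivalence is generated by switchings at single vertices, and the relation ``$(G,\sigma)$ is $f$-list-edge-colorable'' is symmetric under the operation we construct. So fix a vertex $v$, let $\sigma'$ be obtained from $\sigma$ by switching at $v$, and assume $(G,\sigma)$ is $f$-list-edge-colorable. We show $(G,\sigma')$ is as well. Since switching at $v$ twice restores $\sigma$, the converse follows by symmetry. The equality $\chi_l'(G,\sigma)=\chi_l'(G,\sigma')$ then follows by taking $f\equiv k$.

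The key construction is a bijection between list assignments, and between colorings, that negates everything on the half-edges at $v$. Given a list edge assignment $L'$ on $(G,\sigma')$ with $|L'(h)|\ge f(e_h)$, define $L$ on $H(G)$ by
\[
L(h)=-L'(h)\ \text{ if } h\in H_G(v), \qquad L(h)=L'(h)\ \text{ otherwise}.
\]
We check that $L$ is a list edge assignment for $(G,\sigma)$, i.e.\ $L(h)=-\sigma(e_h)L(\hat h)$. For an edge not incident with $v$ nothing changes. For an edge $e=vu$, let $h=h_e^v$ and $\hat h=h_e^u$. Then
\[
L(h)=-L'(h)=-\bigl(-\sigma'(e)L'(\hat h)\bigr)=\sigma'(e)L(\hat h)=-\sigma(e)L(\hat h),
\]
using $\sigma'(e)=-\sigma(e)$. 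The reverse relation $L(\hat h)=-\sigma(e)L(h)$ follows the same way, since $\sigma(e)^2=1$ and negation of sets is an involution. Clearly $|L(h)|=|L'(h)|\ge f(e_h)$, and since $G$ is simple there are no loops at $v$ to worry about.

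By hypothesis $(G,\sigma)$ has an $L$-edge-coloring $\phi$. Define $\phi'(h)=-\phi(h)$ for $h\in H_G(v)$ and $\phi'(h)=\phi(h)$ otherwise. The identical computation gives $\phi'(h)=-\sigma'(e_h)\phi'(\hat h)$ and $\phi'(h)\in L'(h)$. For properness, consider adjacent half-edges $h,h'$ at a common vertex. If that vertex is not $v$, both colors are unchanged. If it is $v$, both colors are negated, so $\phi'(h)\neq\phi'(h')$ still holds. Hence $\phi'$ is an $L'$-edge-coloring of $(G,\sigma')$.

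The only real care needed is the sign bookkeeping in the consistency condition $L(h)=-\sigma(e_h)L(\hat h)$ for both half-edges of an edge at $v$; everything else is immediate.
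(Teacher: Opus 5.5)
Your proof is correct and takes essentially the paper's approach: negate the lists and the colors on half-edges at switched vertices, then check the consistency condition and properness. The only difference is that you reduce to a single-vertex switching and iterate, whereas the paper switches at a whole set $X$ at once; both work equally well.
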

\begin{proof}
It suffices to prove that if $(G,\sigma)$ is $f$-list-edge-colorable, then so is $(G,\sigma')$.
Let $L'$ be a list edge assignment on $(G,\sigma')$ such that $|L'(h)| \geq f(e_h)$ for every $h\in H(G)$. Since $(G,\sigma)$ and $(G,\sigma')$ are switching equivalent, there exists a subset $X\subseteq V (G)$ such that $(G,\sigma)$ is obtained from $(G,\sigma')$ by switching at all vertices in $X$. Define a list edge
assignment $L$ on $(G,\sigma)$ by
$$L(h)=\left\{
\begin{array}{rl}
-L'(h) & \mbox{ if the end of $h$ belongs to $X$};\\
L'(h) & \mbox{ otherwise},
\end{array}
\right. \forall h\in H(G).
$$
Then $|L(h)| = |L'(h)| \geq f(e_h)$ for all $h \in H(G)$. Since $(G, \sigma)$ is $f$-list-edge-colorable, there exists an $L$-edge-coloring $\phi$ of $(G,\sigma)$.  Define a coloring $\phi'$ for $(G,\sigma')$ by
$$
\phi'(h)=\left\{
\begin{array}{rl}
-\phi(h) & \mbox{ if the end of $h$ belongs to $X$};\\
\phi(h) & \mbox{ otherwise},
\end{array}
\right. \forall h\in H(G).
$$
It is straightforward to verify that $\phi'$ is an $L'$-edge-coloring of $(G, \sigma')$. Therefore, $(G, \sigma')$ is $f$-list-edge-colorable, which completes the proof.
\end{proof}


\section{List edge colorings of specific signed graphs}
\label{s3: special}
Before proceeding to the proof of Theorem \ref{maintheorem}, we  investigate list edge colorings of several classes of signed graphs. For two integers $n_1, n_2$ with $n_1\leq n_2$, let $[n_1, n_2]$ be the set of integers between $n_1$ and $n_2$.

\begin{lemma}\label{le: cycle-1}
Let $(C,\sigma)$ be a signed cycle, and let $f : E(C)\to \mathbb{Z}^+ $ be a function. Then $(C,\sigma)$ is $f$-list-edge-colorable if one of the following conditions holds:
\begin{itemize}
\item [\rm (1)] There exist two edges $e', e'' \in E(C)$ such that $f(e')=1$, $f(e'')\geq 3$,  and  $f(e)\geq 2$ for each $e\in E(C)\setminus\{e',e''\}$;

\item [\rm (2)] $(C,\sigma)$ is balanced and $f(e)\geq2$ for every $e\in E(C)$.
\end{itemize}
\end{lemma}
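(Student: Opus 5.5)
The plan is to argue directly by greedy colouring rather than through the polynomial of Lemma~\ref{CN}; a cycle is simple enough for this. Write $C=v_1v_2\cdots v_nv_1$ with edges $e_i=v_iv_{i+1}$ (indices mod $n$). Fix a list edge assignment $L$ with $|L(h)|\ge f(e_h)$ for every half-edge $h$. Colouring an edge $e_i$ means choosing $\phi(h^{v_i}_{e_i})\in L(h^{v_i}_{e_i})$; this determines $\phi(h^{v_{i+1}}_{e_i})$. The constraint between consecutive edges $e_{i-1},e_i$ is that the two half-edges at $v_i$ get different colours. Each such constraint forbids at most one colour for a half-edge, given a colour on its neighbour. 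By Lemma~\ref{switching} we may switch freely, so in case (2) we may assume all edges are negative; then $\phi$ is just an ordinary list edge colouring of the cycle $C$ with $|L(e)|\ge 2$.

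For (1), I will number the cycle so that $e''=e_1$ and $e'=e_n$, where $e'$ and $e''$ are distinct edges. I first give $e'$ its unique available colour. Then I colour $e_2,e_3,\dots,e_{n-1}$ in this order. When $e_2$ is coloured, the edge $e_n$ is not adjacent to it (unless $n=3$; that small case is handled the same way). So each $e_i$ with $2\le i\le n-1$ has at most one previously coloured neighbour, namely $e_{i-1}$, except that $e_{n-1}$ also has the neighbour $e_n$. This is where the ordering must be set up properly. To fix it, I reorder: colour $e'$ first, then go around starting from the neighbour of $e'$ that is not $e''$, ending with $e''$. Every intermediate edge has exactly one already-coloured neighbour and at least $2$ colours, so a choice remains. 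The last edge $e''$ has two coloured neighbours, each forbidding at most one colour, and $|L|\ge 3$ leaves a choice. If $e'$ and $e''$ happen to be adjacent, the same order still works: $e''$ is coloured last with two constraints. If some edges have larger $f$, nothing changes.

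For (2), after switching to the all-negative signature, the claim is that an even or odd cycle with all lists of size $\ge 2$ is list edge colourable. Here I use the balance hypothesis, which is essential: an unbalanced triangle with $M_2$ lists fails. The line graph of $C$ is again a cycle $C_n$. It is classical that $C_n$ is $2$-choosable iff $n$ is even, so the odd case needs care. Instead I argue directly. If all lists are equal to the same $2$-set, then with all edges negative and lists $\{a,b\}$ an odd cycle cannot be coloured, so something must differ. I will therefore recheck the setup: the switching should reduce to the all-\emph{positive} signature when the cycle is balanced, since a balanced cycle switches to all-positive. With all edges positive, $\phi(\hat h)=-\phi(h)$. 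The argument then runs as follows. If some two adjacent half-edges $h,h'$ at a vertex $v$ have $L(h)\ne L(h')$, pick $c\in L(h)\setminus L(h')$ and colour $e_h$ so that $\phi(h)=c$. Then the constraint at $v$ is automatically satisfied, and we greedily colour around the cycle away from $v$, ending at $e_{h'}$. Its two constraints, one at $v$, are not active, because $c\notin L(h')$, so it has one forbidden colour and $2$ available. If all pairs of adjacent half-edges have equal lists, then going around, $L(h^{v_{i+1}}_{e_i})=-L(h^{v_i}_{e_i})$ and these equal the list at the next edge. So every edge's list is $\pm S$ for a fixed $2$-set $S$, alternating sign at the ends. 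I then pick $s\in S$ and set $\phi(h^{v_i}_{e_i})=s$ for all $i$; the far half-edge gets $-s$. At $v_{i+1}$ the two half-edges carry $-s$ and $s$, which are distinct unless $s=0$. Since $|S|\ge2$ I choose $s\ne0$.

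The main obstacle is this final equal-lists subcase, where the greedy argument gives no slack and the sign structure of the balanced cycle has to be used explicitly.
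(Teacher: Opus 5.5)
The step you flagged as the main obstacle fails as written. In the equal-lists subcase your recurrence gives $L(h^{v_{i+1}}_{e_{i+1}})=L(h^{v_{i+1}}_{e_i})=-L(h^{v_i}_{e_i})$, i.e.\ $L(h^{v_i}_{e_i})=(-1)^{i-1}S$. So the constant assignment $\phi(h^{v_i}_{e_i})=s$ is admissible only if $s\in S\cap(-S)$ and $s\neq 0$. Going once around the cycle forces only $S=(-1)^nS$. For odd $n$ this gives $S=-S$, so $S$ contains some $a\neq 0$ together with $-a$, and your choice works. For even $n$ nothing of the kind is forced.

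Concretely, take the all-positive $4$-cycle with $L(h^{v_i}_{e_i})=(-1)^{i-1}\{1,2\}$, so $L(h^{v_{i+1}}_{e_i})=(-1)^{i}\{1,2\}$. Every pair of adjacent half-edges has equal lists, so you are in this subcase. But no $s\in\{1,2\}$ lies in $L(h^{v_2}_{e_2})=\{-1,-2\}$. With $\{0,1\}$ in place of $\{1,2\}$, the only common value is $0$, which you excluded (and which indeed fails).

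The repair is easy. For even $n$, choose distinct $p,q\in S$, let $b_i$ alternate $p,q,p,q,\dots$, and set $\phi(h^{v_i}_{e_i})=(-1)^{i-1}b_i$. The condition at $v_{i+1}$ becomes $b_i\neq b_{i+1}$, and the condition at $v_1$ becomes $(-1)^nb_n\neq b_1$; both hold. Even simpler, do what the paper does, which you yourself noted before switching to all-positive. A balanced even cycle is switching equivalent to $-\mathbf{1}$, so the even case is just $\chi_l'(C_{2k})=2$, and your all-positive argument is needed only for odd $n$.

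Two smaller fixes are also needed:
\begin{itemize}
\item In the unequal-lists case, $L(h)\neq L(h')$ gives a nonempty difference in only one direction when the sizes differ, so swap $h,h'$ if needed.
\item In (1), when $e''$ is not adjacent to $e'$, no single sweep around the cycle can end at $e''$. Sweeping in one direction can leave the other neighbour of $e'$, which has only two colours, with two coloured neighbours. Instead, colour each of the two arcs of $C-\{e',e''\}$ starting from its end at $e'$, and colour $e''$ last.
\end{itemize}

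With these repairs, your elementary greedy argument is a valid alternative to the paper's route. The paper instead applies Lemma~\ref{CN} to the monomial $x_2\cdots x_{i-1}x_i^2x_{i+1}\cdots x_n$ for (1), and to $x_1\cdots x_n$ (coefficient $-2$) for odd balanced cycles.
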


\begin{proof}
Let $C=12\cdots n1 $ be the cycle with edges $e_i = i(i+1)$ for $i\in [1,n-1]$ and $e_n = n1$.
By Lemma \ref{switching}, we may assume that $\sigma(e_i)=-1$ for $i\in [1, n-1]$ and $\sigma(e_n)\in \{-1,1\}$.
Assign a variable $x_i$ to the half-edge of $e_i$ in $\widetilde{H}(C)$ for each $i \in [1, n]$. By Eq.~(\ref{eq: polynomial1}),
$$
P_{(C, \sigma)} = (x_1 - x_n)(x_1 - x_2)(x_2 - x_3) \cdots (x_{n-2} - x_{n-1})(x_{n-1}+ \sigma(e_n) x_n).
$$

(1)  Without loss of generality, assume that $e'=e_1$ and $e''=e_i$ for some $i\in [2,n-1]$.
Then  the monomial $x_2 \cdots x_{i-1} x^2_i x_{i+1}\cdots x_n$ in $P_{(C,\sigma)}$ has coefficient $(-1)^i$, and  is nonzero. For this monomial, the exponent of $x_1$ is $0$, the exponent of
$x_i$ is $2$, and the exponent of every other variable is $1$.
Therefore, the assumptions that
$f(e_1)=1$, $f(e_i)\ge 3$, and $f(e)\ge 2$
for all $e\in E(C)\setminus\{e_1,e_i\}$ imply that
$f(e_h)\ge i_h+1$ for every $ h\in \widetilde H(C)$.
Thus the hypotheses of Lemma~\ref{CN} are satisfied for this monomial. Hence $(C,\sigma)$ is $f$-list-edge-colorable.

(2) Since $(C,\sigma)$ is balanced,  $n$ is even if $\sigma(e_n)=-1$, and $n$ is odd if $\sigma(e_n)=1$. In  the former case, $\chi_l'(C, \sigma)=\chi_l'(C, -{\bf 1})=\chi_l'(C)=2$.
In the latter case, the coefficient of the monomial $x_1 x_2 \cdots x_n$ in $P_{(C, \sigma)}$ equals $-2$. 
Since $f(e)\ge 2$ for every $e\in E(C)$, the
hypothesis $f(e_h)\ge i_h+1$ of Lemma~\ref{CN} is satisfied for this monomial.
Thus, in both cases, $(C,\sigma)$ is $f$-list-edge-colorable.
\end{proof}

\begin{lemma}\label{le: cycle-2}
Let $C$ be an even cycle with a signature $\sigma$, and $L$ be a list edge assignment on $(C,\sigma)$ satisfying $|L(h)|=2$ for every $h\in H(C)$.
Then $(C, \sigma)$ admits no $L$-edge-coloring if and only if it is unbalanced and there exists some $a\in \mathbb{Z}$ such that $L(h)=\{a,-a\}$ for all $h\in H(C)$.
\end{lemma}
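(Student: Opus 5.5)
We prove both directions, normalizing the signature first. As in the proof of Lemma~\ref{le: cycle-1}, write $C=12\cdots n1$ with $n$ even and edges $e_i=i(i+1)$, $e_n=n1$. The switching in Lemma~\ref{switching} sends lists $L(h)$ to $\pm L(h)$. A family of the form $L(h)=\{a,-a\}$ for all $h$ is invariant under this, and so is balancedness. Hence both sides of the equivalence are switching invariant, and we may assume $\sigma(e_i)=-1$ for $i\in[1,n-1]$. Under this normalization, $L(h)=L(\hat h)$ on negative edges, so we may write $L(e_i)$ for them.

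\emph{Balanced case.} If $\sigma(e_n)=-1$, the cycle is balanced, and $(C,\sigma)$ is just the all-negative even cycle. An $L$-edge-coloring is then an ordinary list edge coloring of $C$, which exists because $\chi'_l(C)=2$ for even cycles (this is also the balanced case of Lemma~\ref{le: cycle-1}(2)). So in the balanced case a coloring always exists, consistent with the statement.

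\emph{Unbalanced case.} Now suppose $\sigma(e_n)=1$. Let $h$ be the half-edge of $e_n$ at $n$ and $\hat h$ the one at $1$, so $L(\hat h)=-L(h)$.

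For the "if" direction, assume all lists equal $\{a,-a\}$. Then $a\neq0$, since the lists have size $2$. Adjacent negative edges $e_1,\dots,e_{n-1}$ must alternate between $a$ and $-a$. Since $n-1$ is odd, $e_1$ and $e_{n-1}$ get the same color, say $c$. At vertex $n$, we need $\phi(h)\neq c$, which forces $\phi(h)=-c$. At vertex $1$, we need $\phi(\hat h)\neq c$, but $\phi(\hat h)=-\phi(h)=c$. This is a contradiction, so no $L$-edge-coloring exists.

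For the "only if" direction, we suppose no $L$-edge-coloring exists and show all lists equal some $\{a,-a\}$.

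First, fix a color $c\in L(h)$ for $e_n$. Then $e_1$ must avoid $-c$ and $e_{n-1}$ must avoid $c$. This leaves a path $e_1\cdots e_{n-1}$ in which every edge has a list of size at least $2$, except that the two end edges may drop to size $1$. Greedy coloring from $e_1$ along the path succeeds unless $e_{n-1}$ is forced into a conflict at the end. Doing this for both choices of $c$, and also propagating from the other end, should pin down the structure.

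Second, I think a cleaner route is to reduce to a counting argument. With all lists of size $2$, a path with the end restrictions behaves like a chain of forced choices. For each choice of $\phi(e_1)\in L(e_1)$, greedy coloring along the path is forced whenever the remaining list has size $1$. Failure for all four combinations of ($c$, start color) should force every $L(e_i)$ to equal $L(e_{i+1})$ along the path, giving a common list $\{p,q\}$. Failure at $e_n$ then forces $L(h)=\{p,q\}=-\{p,q\}$ via the constraints at vertices $n$ and $1$. Combined with $n$ even, this forces $\{p,q\}=\{a,-a\}$.

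Third, I would carry this out by the standard argument for lists of size $2$ on even cycles. If some two adjacent half-edges at a vertex $v$ have different lists, pick a color $\alpha$ in one list but not the other. Color that half-edge's edge with $\alpha$ and greedily color the rest of the cycle, going around away from $v$ and ending at the other edge at $v$. Each intermediate edge has at least one available color. The last edge has two colors available, minus one neighbor on its far side, and it does not conflict with $\alpha$ at $v$ since $\alpha$ is not in its list. This succeeds, which is a contradiction. Hence at every vertex the two incident half-edges have equal lists. Going around the cycle through the relations $L(\hat h)=-\sigma L(h)$, all lists are $S$ and also $-S$ across the positive edge $e_n$, so $S=-S$. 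A set of size $2$ with $S=-S$ is $\{a,-a\}$ with $a\neq 0$. Finally, if $C$ were balanced we already have a coloring, so $C$ is unbalanced.

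The main obstacle is the greedy step when colors coincide through the sign flips, i.e., checking the last edge in the unbalanced orientation. I expect to handle it by starting the greedy sweep at the vertex where the lists differ, so the sign flip occurs on an interior edge where it is harmless.
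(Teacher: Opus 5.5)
Your proof is correct and is essentially the paper's argument: the same switching normalization, the same direct verification for sufficiency, and for necessity the same greedy trick of first coloring with a color missing from the adjacent list. That trick is your third paragraph, which is the one that does the work; the first two sketches can be dropped. The only difference is that you run this greedy step at every vertex, including the two ends of the positive edge $e_n$, which gives $S=-S$ at once; the step never uses the signature, so the obstacle you anticipate does not arise. The paper instead runs it only at vertices $2,\dots,n-1$ and then forces $L(h^1_{e_n})=\{a,-a\}=\{b,-b\}$ by extending an alternating coloring of $C-e_n$.
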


\begin{proof}
Let $C=12\cdots n1 $ be the cycle with edges $e_i = i(i+1)$ for $i\in [1,n-1]$ and $e_n = n1$.
 By Lemma \ref{switching}, we may assume $\sigma(e_i) = -1$ for all $i \in [1, n - 1]$. For each negative edge $e_i$, $i\in[1,n-1]$, its two half-edge lists are
the same, since
$L(\hat h)=-\sigma(e_i)L(h)=L(h)$.
We denote this common list by $L(e_i)$.

Sufficiency.  Since $n$ is even and $(C,\sigma)$ is unbalanced, we have $\sigma(e_n) = 1$. A direct verification shows that $(C, \sigma)$ admits no $L$-edge-coloring since $L(h)=\{a,-a\}$ for all $h\in H(C)$.

Necessity. By Lemma \ref{le: cycle-1}-(2), $(C, \sigma)$ is unbalanced, and so $\sigma(e_n)=1$ since $n$ is even. We first show that $L(e_i)=L(e_{i+1})$ for every $i\in [1,n-2]$. Suppose otherwise, then there  exists some $j\in[1,n-2]$ such that $L(e_{j+1}) \setminus L(e_j)\neq\emptyset$. Choose a color $a\in L(e_{j+1}) \setminus L(e_j)$ and assign it to  $e_{j+1}$. Now the remaining sequence $e_{j+2},\dots,e_{n-1}$, $h_{e_n}^n$, $h_{e_n}^1$, $e_1,\dots,e_j$ can be colored greedily, which gives an $L$-edge-coloring of $(C, \sigma)$, a contradiction. Thus there exist colors $a, b$ such that $L(e_i)=\{a,b\}$ for all $i\in [1,n-1]$.
Because $n$ is even, the path $C-e_n$ admits a proper edge coloring $\phi$ in which both $e_1$ and $e_{n-1}$ receive the color $a$. If $L(h^1_{e_n})\neq \{a,-a\}$,
then there exists a color $c\in L(h^1_{e_n})\setminus\{a,-a\}$.
Assigning $c$ to $h^1_{e_n}$ and $-c$ to $h^n_{e_n}$ extends $\phi$ to an $L$-edge-coloring, a contradiction.
Hence $L(h^1_{e_n})=L(h^n_{e_n})=\{a,-a\}$. By symmetry between $a$ and $b$, we also have $L(h^1_{e_n})=L(h^n_{e_n})=\{b,-b\}$, which forces $b=-a$. Therefore the claim holds.
\end{proof}


\begin{lemma}\label{le: cycle+e0}
Let $C = 12 \cdots n1$ with $n\ge 4$ be a cycle and let $G=C+e_0$, where $e_0$ is either a pendant edge $10$ or a chord $1t$ with $t \in [3, n - 1]$. Define a function $f: E(G)\to \mathbb{Z}^+$ by
$$f(e) = d_G(i) + d_G(j) - 2, \forall e=ij\in E(G).$$
 Then $(G, \sigma)$ is $f$-list-edge-colorable for any signature $\sigma$.
\end{lemma}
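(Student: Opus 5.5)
The plan is to avoid the polynomial computation and argue directly. I reduce to the single-cycle case, Lemma~\ref{le: cycle-1}(1), by first handling $e_0$ suitably. Two observations will be used. First, $f$-list-edge-colorability is monotone in $f$. So Lemma~\ref{le: cycle-1}(1) implies that a signed cycle is colorable whenever one edge has list size at least $3$, another edge has list size at least $1$, and all remaining edges have list size at least $2$. In particular, a cycle with one edge of list size $\ge 3$ and all other edges of list size $\ge 2$ is colorable: pick any other edge as $e'$. Second, deleting colors from the list of one half-edge and the corresponding negated colors from the other half-edge preserves the rule $L(h)=-\sigma(e_h)L(\hat h)$, so the restricted lists are again a list edge assignment. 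Let $L$ be any list edge assignment on $(G,\sigma)$ with $|L(h)|\ge f(e_h)$.

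\emph{Chord case} $e_0=1t$. Here $d_G(1)=d_G(t)=3$ and all other degrees are $2$. Hence $f(e_0)=4$, the four cycle edges $12,1n,t(t-1),t(t+1)$ have $f=3$, and all other cycle edges have $f=2$. Since $|L(h^1_{e_0})|\ge 4>3\ge |L(h^1_{12})|$, I choose a color $c\in L(h^1_{e_0})$ with $c\notin L(h^1_{12})$, and color $e_0$ with $c$ at $1$ and $-\sigma(e_0)c$ at $t$. Then I delete $c$ from $L(h^1_{1n})$ and $-\sigma(e_0)c$ from the lists at $t$ of $t(t-1)$ and $t(t+1)$, removing the corresponding negated colors at the other ends. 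After this, $12$ still has list size $\ge 3$ and every other cycle edge has size $\ge 2$. By the first observation, Lemma~\ref{le: cycle-1}(1) colors the cycle. Together with the color on $e_0$ this is an $L$-edge-coloring. One must check that $t\neq 2$, so that $12$ is not incident with $t$; this holds because $t\in[3,n-1]$.

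\emph{Pendant case} $e_0=10$. Here $f(e_0)=2$, $f(12)=f(1n)=3$, and the other cycle edges have $f=2$. Let $S=L(h^1_{e_0})$, with $|S|\ge 2$. I delete $S$ from $L(h^1_{12})$, and the corresponding colors from the other half-edge of $12$, leaving a list of size at least $|L(h^1_{12})|-|S|$. I make sure this remaining list has size at least $1$ by keeping only two colors of $S$ and shrinking the lists to exactly the sizes of $f$, which is harmless. The resulting sizes on the cycle are then: $12$ at least $1$, $1n$ equal to $3$, the others $2$. This is exactly the setting of Lemma~\ref{le: cycle-1}(1) with $e'=12$ and $e''=1n$. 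The coloring it produces has $\phi(h^1_{12})\notin S$, so at vertex $1$ only $\phi(h^1_{1n})$ can block $e_0$. Since $|S|=2$, some color of $S$ remains for $h^1_{e_0}$, and the vertex $0$ imposes no constraint.

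The main point needing care is the pendant case, because $e_0$ has only two colors available while being adjacent to two cycle edges. The trick of forcing $h^1_{12}$ to avoid $S$ in advance is what makes it work, and it needs $|L(h^1_{12})|\ge |S|+1=3$, which is exactly $f(12)=3$. Beyond this, I only need to verify that each restricted list assignment still satisfies the signed consistency condition and that the edges playing the roles of $e',e''$ in Lemma~\ref{le: cycle-1} are distinct. If a direct check is preferred instead, the fallback is Lemma~\ref{CN}: the total degree is $n+2$ (resp.\ $n+4$), below the exponent budget $n+3$ (resp.\ $n+7$), so only a suitable monomial with nonzero coefficient would need to be exhibited.
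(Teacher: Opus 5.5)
Your argument is correct, and it takes a genuinely different route from the paper.

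The paper's proof is purely algebraic. It first switches so that $e_1,\dots,e_{n-1}$ (and a pendant $e_0$) are negative. It then writes down $P_{(G,\sigma)}$ and asserts that the monomial $x_0x_1\cdots x_{n-1}x_n^2$ (pendant case), respectively $x_0^3x_1\cdots x_{n-1}x_n^2$ (chord case), has coefficient $1$. Lemma~\ref{CN} finishes the proof.

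You never touch the polynomial. In the chord case you precolor $e_0$ with a color $c$ missing from $L(h^1_{12})$, so that $12$ keeps three colors and every other cycle edge keeps at least two. In the pendant case you reserve the two colors of $e_0$ by deleting them from $L(h^1_{12})$, which lets $12$ play the role of $e'$ and $1n$ the role of $e''$. Both cases then reduce to Lemma~\ref{le: cycle-1}(1), which is itself an elementary greedy fact. I checked the bookkeeping: $t\in[3,n-1]$ means no cycle edge joins $1$ and $t$, $12$ is not incident with $t$, and $1n\neq t(t+1)$. So each cycle edge loses at most one color, and the restricted lists remain valid signed list assignments.

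Your approach needs no switching and no coefficient computation, which the paper leaves unverified. It also shows exactly where $f(12)=3$ and $f(e_0)=4$ are used. The paper's approach has the advantage of being uniform with the Nullstellensatz certificates it uses for the other lemmas, such as $K_{3,3}$.

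One small fix: in the chord case, the step $|L(h^1_{e_0})|\ge 4>3\ge |L(h^1_{12})|$ assumes the lists have already been shrunk to exactly size $f(e_h)$, consistently on both half-edges. You state that reduction only in the pendant case, so state it once at the start. Alternatively, note that if $|L(h^1_{12})|\ge 4$, the edge $12$ keeps at least three colors for any choice of $c$.
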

\begin{proof}
Denote $e_i = i(i+1)$ for $i \in [1, n-1]$ and $e_n = n1$. By Lemma \ref{switching}, we may assume that $\sigma(e_i) = -1$ for $i \in [1, n-1]$ and also $\sigma(e_0)=-1$ when $e_0=10$. Associate the half-edge of $e_i$ in $\widetilde{H}(G)$ with a variable $x_i$ for each $i \in [0, n]$.  Using (\ref{eq: polynomial1}), we obtain

$$
P_{(G, \sigma)} =
\begin{cases}
P & \text{if } e_0 = 10; \\
P \cdot (-\sigma(e_0)x_0 - x_{t-1})(-\sigma(e_0)x_0 - x_t) & \text{if } e_0 = 1t,
\end{cases}
$$
where $P = (x_0 - x_n)(x_1 - x_n)(x_{n-1} + \sigma(e_n)x_n) \prod_{i=0}^{n-2}(x_i - x_{i+1})$.

If $e_0 = 10$, then $\deg(P_{(G, \sigma)}) = n + 2$ and the coefficient of $x_0x_1 \cdots x_{n-1}x_n^2$ in $P_{(G, \sigma)}$ is equal to $1$. 
Since $f(e_1)=f(e_n)=3$ and $f(e_i)=2$ for
$i\in\{0\}\cup[2,n-1]$, we have $f(e_h)\ge i_h+1$ for this monomial. Hence Lemma~\ref{CN} implies that $(G,\sigma)$ is $f$-list-edge-colorable.

If $e_0 = 1t$, then $\deg(P_{(G, \sigma)}) = n + 4$ and the coefficient of $x_0^3x_1 \cdots x_{n-1}x_n^2$ in $P_{(G, \sigma)}$ is also equal to $1$. Since $f(e_0) = 4$, $f(e_i) = 3$ for $i \in \{1, t - 1, t, n\}$ and $f(e_i) = 2$ for $i \in [2, n - 1] \setminus \{t - 1, t\}$, 
we have $f(e_h)\ge i_h+1$ for this monomial.
Hence Lemma~\ref{CN} implies that $(G,\sigma)$ is $f$-list-edge-colorable.
\end{proof}

\begin{lemma}\label{le: K33}
Let $G$ be the complete bipartite graph $K_{3,3}$. Then  for any signature $\sigma$, $(G,\sigma)$ is $3$-list-edge-colorable.
\end{lemma}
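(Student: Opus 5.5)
Lemma~\ref{le: K33} can be proved by reducing the possible signatures with Lemma~\ref{switching} and then applying Lemma~\ref{CN} to the polynomial $P_{(G,\sigma)}$. Let $G=K_{3,3}$ have parts $\{u_1,u_2,u_3\}$ and $\{w_1,w_2,w_3\}$. Fix the spanning tree $T$ consisting of the star at $u_1$ together with the edges $u_2w_1$ and $u_3w_1$. Switching at suitable vertices makes every edge of $T$ negative, since any signature can be switched to be negative on a spanning tree. Each remaining edge $u_iw_j$ with $i,j\in\{2,3\}$ then carries an arbitrary sign $\epsilon_{ij}$. These four signs record exactly the balance of the four $4$-cycles $u_1w_1u_iw_ju_1$. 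Up to the symmetry of $G$ fixing $T$, which swaps $u_2\leftrightarrow u_3$ and $w_2\leftrightarrow w_3$, only a few sign patterns remain. By Lemma~\ref{switching} it suffices to treat these representatives.

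For each representative I would assign a variable $x_{ij}$ to the half-edge of $u_iw_j$ at $u_i$. With the normalization above, the half-edge at $w_j$ carries $x_{ij}$ or $-x_{ij}$ according to the sign $\epsilon_{ij}$. Then
$$P_{(G,\sigma)}=\prod_{i}\prod_{j<k}(x_{ij}-x_{ik})\prod_{j}\prod_{i<l}(\pm x_{ij}\mp x_{lj}),$$
which has degree $18$ in nine variables. The natural target monomial is $\prod_{i,j}x_{ij}^2$. If its coefficient is nonzero, Lemma~\ref{CN} with $f\equiv 3$ gives $3$-list-edge-colorability.

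For the all-negative signature this coefficient is, up to sign, the classical Alon--Tarsi quantity for $K_{3,3}$. There it equals the difference between the numbers of even and odd $3\times3$ Latin squares, which is nonzero ($\pm 12$). This recovers Galvin's theorem in this case.

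For the other patterns, the signs $\pm$ change the contribution of each orientation, or equivalently of each Latin-square-type term, by a sign that depends on how many positive edges are involved. I would compute the coefficient of $\prod x_{ij}^2$ either by expanding in the three-factor blocks $\prod_{j<k}(x_{ij}-x_{ik})$, which are Vandermonde determinants, or by a direct computer-style enumeration over the $12$ Latin squares with signs.

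The main obstacle is that for some unbalanced patterns the coefficient of $\prod x_{ij}^2$ may vanish through cancellation. In that case I would try other monomials of degree $18$ with all exponents at most $2$, for example ones of the form $x_{11}x_{22}\cdots$ with exponent vectors obtained from different orientations of out-degree $2$. I would also use the freedom to switch to a different tree normalization. If the polynomial method still failed for a pattern, a fallback is a direct argument: first color the star at $u_1$ from its lists, and then argue on the remaining $2$-list structure, which consists of even cycles, using Lemma~\ref{le: cycle-2}. Lemma~\ref{le: cycle-2} characterizes exactly when a $2$-list even cycle fails to be colorable, namely when it is unbalanced with all lists equal to $\{a,-a\}$. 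Because the three colors at $u_1$ can be chosen in several ways, I expect to be able to avoid that bad configuration.
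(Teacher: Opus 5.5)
\textbf{There is a genuine gap.} The Nullstellensatz route cannot finish the proof, and the two places where you need the coefficient to be nonzero are exactly where it vanishes.

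\emph{The polynomial method has no slack.} With nine variables, total degree $18$ and every exponent at most $2$, the only monomial Lemma~\ref{CN} can use with $f\equiv 3$ is $\prod x_{ij}^2$. Switching, or choosing a different tree normalization, only replaces some $x_{ij}$ by $-x_{ij}$ and multiplies $P_{(G,\sigma)}$ by $\pm1$. So the coefficient of $\prod x_{ij}^2$ is a switching invariant up to sign.

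\emph{That coefficient vanishes in two cases.} Expand each Vandermonde factor as a signed sum over permutations. The surviving terms are indexed by Latin squares $A$ on $\{0,1,2\}$, and $\sigma$ enters only through $\prod_{A_{ij}=1}(-\sigma(u_iw_j))$. Hence the coefficient equals $\pm 2\det\bigl(-\sigma(u_iw_j)\bigr)_{i,j}$. This is equivalent to the paper's $\eta_1=4+2(\sigma(e_1)+\sigma(e_5)+\sigma(e_9))-2\sigma(e_1)\sigma(e_5)\sigma(e_9)$.
\begin{itemize}
\item For $\sigma=-\mathbf{1}$ the coefficient is $0$, not $\pm12$. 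For odd order, swapping two rows of a Latin square reverses its sign, so the $12$ Latin squares of order $3$ split $6$--$6$. This case is still fine if you cite Galvin's theorem directly, as the paper does.
\item The coefficient is also $0$ for every signature with exactly one positive edge. In your normalization this includes, for example, exactly one $\epsilon_{ij}=+1$; in fact $10$ of your $16$ patterns are singular. For this unbalanced class no other monomial and no renormalization can help.
\end{itemize}

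\emph{The fallback does not work as stated, and everything now rests on it.} Deleting the star at $u_1$ leaves $K_{2,3}$, not a union of even cycles. Its edges keep lists of size only $\ge 2$, and $K_{2,3}$ has chromatic index $3$. So Lemma~\ref{le: cycle-2} does not apply, and $2$-lists can genuinely fail. Your sentence ``I expect to be able to avoid that bad configuration'' is precisely the content of the lemma in the hard case.

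\emph{What the paper supplies instead.} The key device is a monochromatic perfect matching rather than a star, which is what makes the residue an even cycle with $2$-lists. The steps are:
\begin{enumerate}
\item Normalize to a single positive edge $e_9$.
\item Apply Lemma~\ref{CN} to the smaller graph $G-e_1$, after fixing a color on $e_1$. This shows every color of $L(e)$, $e\in\{e_1,e_2,e_4,e_5\}$, lies in the lists of at least three adjacent edges.
\item Deduce that some perfect matching $M$ of $G-e_9$ has a common color $a$, and color $M$ with $a$.
\item The rest is an unbalanced $6$-cycle whose negative edges lose only $a$, while $e_9$ loses $a$ and $-a$.
\item Lemmas~\ref{le: cycle-1}(1) and~\ref{le: cycle-2} then force either the lists $\{b,-b\}$ or $\{a,-a\}\subseteq L(h_9)$.
\item Explicit recolorings finish both cases.
\end{enumerate}
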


\begin{proof}
Suppose for contradiction that there exist a signature $\sigma$ and a list edge assignment $L$ on $(G,\sigma)$ with $|L(h)|=3$ for every $h\in H(G)$, such that $(G,\sigma)$ admits no $L$-edge-coloring. Let $V(G)=[1,6]$ with bipartition $A=\{1,2,3\}$ and $B=\{4,5,6\}$, and $E(G)=\{e_1, \dots, e_9\}$, where $e_1=14,e_2=15,e_3=16,e_4=24,e_5=25,e_6=26,e_7=34,e_8=35$ and $e_9=36$. Assign a variable $x_i$ to the half-edge of $e_i$ in $\widetilde{H}(G)$ for each $i \in [1,9]$.

\begin{claim}\label{cl: one positive}
$(G,\sigma)$ is switching equivalent to a signed graph $(G,\sigma')$ with exactly one positive edge.
\end{claim}

\noindent {\it Proof of Claim \ref{cl: one positive}.} Since $G=K_{3,3}$, $(G,\sigma)$ contains a balanced Hamilton cycle; without loss of generality, take the cycle formed by the edges in $E(G)\setminus \{e_1, e_5, e_9\}$.
By Lemma \ref{switching}, we may assume that $\sigma(e)=-1$ for every $e\in E(G)\setminus \{e_1, e_5, e_9\}$.
Using (\ref{eq: polynomial1}), we obtain
\begin{equation*}
\begin{split}
P_{(G, \sigma)}=&(x_1-x_2)(x_1-x_3)(x_2-x_3)\cdot(x_4-x_5)(x_4-x_6)(x_5-x_6)\cdot\\
&(x_7-x_8)(x_7-x_9)(x_8-x_9)\cdot(-\sigma(e_1)x_1-x_4)(-\sigma(e_1)x_1-x_7)(x_4-x_7)\cdot\\
&(x_2+\sigma(e_5)x_5)(x_2-x_8)(-\sigma(e_5)x_5-x_8)\cdot
(x_3-x_6)(x_3+\sigma(e_9)x_9)(x_6+\sigma(e_9)x_9).
\end{split}
\end{equation*}
A direct calculation shows that the coefficient of $x_1^2x_2^2\cdots x_9^2$ in $P_{(G,\sigma)}$ equals
$$
\eta_1=4+2(\sigma(e_1)+\sigma(e_5)+\sigma(e_9))-2\sigma(e_1)\sigma(e_5)\sigma(e_9).
$$
If at least two of $\{e_1, e_5, e_9\}$ are positive, then $\eta_1\neq 0$. Applying Lemma~\ref{CN} to the monomial $x_1^2x_2^2\cdots x_9^2$ with $f(e)=3$ for all $e\in E(G)$, we obtain that $(G,\sigma)$ is
$3$-list-edge-colorable, a contradiction. 
If none of them is positive, then $\chi_l'(G,\sigma)=\chi_l'(G,-{\bf 1})=\chi_l'(G)=3$, contradicting our assumption. Hence exactly one of $\{e_1, e_5, e_9\}$ is positive, proving the claim. $\Box$

\medskip

By Claim \ref{cl: one positive} and Lemma \ref{switching}, we assume that $e_9$ is the unique positive edge in $(G,\sigma)$. Let $h_9$ and $\hat{h}_9$ be the half-edges of $e_9$ incident with vertices $3$ and $6$, respectively. Then $L(h_9)=-L(\hat{h}_9)$ and $L(e)=L(h_{e}^i)=L(h_e^j)$ for every $e=ij\in E(G)\setminus \{e_9\}$.

\begin{claim}\label{cl: four edges}
For any $e\in \{e_1, e_2,e_4, e_5\}$,  every color of $L(e)$ belongs to the lists of at least three edges adjacent to $e$.
\end{claim}

\noindent {\it Proof of Claim \ref{cl: four edges}.} Assign a color $a\in L(e_1)$ to $e_1$, and let $L'$ be the list edge assignment on $(G-e_1, \sigma)$ obtained from $L$ by setting $L(e)$ to $L'(e)=L(e)\setminus \{a\}$ for each $e\in \{e_2, e_3, e_4,e_7\}$. Then $(G-e_1, \sigma)$ admits no $L'$-edge-coloring. By (\ref{eq: polynomial1}),
\begin{equation*}
\begin{split}
P_{(G-e_1, \sigma)}= &(x_2-x_3)\cdot(x_4-x_5)(x_4-x_6)(x_5-x_6)\cdot(x_7-x_8)(x_7-x_9)(x_8-x_9)\\
& (x_4-x_7)\cdot(x_2-x_5)(x_2-x_8)(x_5-x_8) \cdot (x_3-x_6)(x_3+x_9)(x_6+x_9).
\end{split}
\end{equation*}
One can easily check that the coefficients of the monomials $(x_2^2\cdots x_9^2)/(x_2x_3)$,  $(x_2^2\cdots x_9^2)\\/(x_2x_4)$, $(x_2^2\cdots x_9^2)/(x_2x_7)$ and $(x_2^2\cdots x_9^2)/(x_3x_7)$ in $P_{(G-e_1, \sigma)}$ are respectively $\eta_2=2, \ \eta_3=1, \ \eta_4=-3$ and $\eta_5=1$.
If the color $a$ appears in the lists of at most two edges among $\{e_2, e_3, e_4, e_7\}$,
then the remaining list sizes match one of the above
nonzero monomials. Hence Lemma~\ref{CN} implies that $(G-e_1,\sigma)$ admits an $L'$-edge-coloring, a contradiction.
Therefore, by the symmetry among $\{e_1, e_2, e_4, e_5\}$ and the arbitrariness of $a$, the claim follows.
$\Box$

\begin{claim}\label{cl: monchromatic}
There exists a perfect matching $M$ of $G-e_9$ such that $\cap_{e\in M}L(e)\neq \emptyset$.
\end{claim}

\noindent {\it Proof of Claim \ref{cl: monchromatic}.} Take a color $a\in L(e_1)$, and suppose first that $a\notin L(e_3)$. By Claim \ref{cl: four edges}, we have $a\in L(e_2)\cap L(e_4)\cap L(e_7)$, and so $a\in L(e_5)\cap L(e_8)$. If $a\in L(e_6)$, then $\{e_1, e_6, e_8\}$ is a required perfect matching. If $a\notin L(e_6)$, then assign color $a$ to both $e_1$ and $e_5$, and consider the list edge assignment $L'$ on $(G-e_1-e_5, \sigma)$ obtained from $L$ by removing $a$ from the list of every edge incident with $e_1$ or $e_5$. Using an argument similar to the proof of Claim \ref{cl: four edges}, we find that $(G-e_1-e_5, \sigma)$ admits an $L'$-edge-coloring, a contradiction. Hence $a\in L(e_3)$ must hold. By the arbitrariness of $a$ and the symmetry between $e_3$ and $e_7$, we obtain that $L(e_1)=L(e_3)=L(e_7)$.

If $L(e_5)\cap L(e_1)\neq \emptyset$, then $\{e_3, e_5,e_7\}$ is a perfect matching satisfying the claim. Assume therefore that $L(e_5)\cap L(e_1)=\emptyset$. Choose $b\in L(e_5)$. Then $b\notin L(e_1)$ and by Claim \ref{cl: four edges}, it belongs to  at least one of $\{L(e_2), L(e_4)\}$, say $b\in L(e_2)$. Since $L(e_1)=L(e_3)$, $b$ appears in the lists of at most two edges adjacent to $e_2$, contradicting Claim \ref{cl: four edges}. Thus the claim is proved.
$\Box$
\medskip

\noindent {\bf The final step.} By Claim \ref{cl: monchromatic}, let $M$ be a perfect matching of $G-e_9$ and choose a  color $a\in \cap_{e\in M}L(e)$.
Define a list edge assignment $L'$ on $(G-M,\sigma)$ by setting $L'(e)=L(e)\setminus \{a\}$ for $e\in E(G-M-e_9)$ and $L'(h)=L(h)\setminus \{a, -a\}$ for $h\in \{h_9, \hat{h}_9\}$. Since $(G,\sigma)$ admits no $L$-edge-coloring, $(G-M,\sigma)$ admits no $L'$-edge-coloring. Because $G-M$ is an unbalanced even cycle, Lemma \ref{le: cycle-1}-(1) and \ref{le: cycle-2} imply that $a\in L(e)$ for every $e\in E(G-e_9)$, and one of the following holds:
\begin{itemize}
\item there exists a color $b\notin \{a, -a\}$ such that $L'(h)=\{b, -b\}$ for $h\in H(G-M)$;
\item $\{a, -a\}\subseteq L(h_9)$.
\end{itemize}

Suppose the former holds. Then $L(e)=L(h_9)=\{a, b, -b\}$ for $e\in E(G-M-e_9)$ and $L(\hat{h}_9)=\{-a, b, -b\}$. If $L(e)=\{a, b, -b\}$ for each $e\in M$,  we can define an $L$-edge-coloring $\phi$ of $(G,\sigma)$ by $\phi(e_1)=\phi(e_6)=\phi(e_8)=b$, $\phi(e_3)=\phi(e_5)=\phi(e_7)=-b$, $\phi(e_2)=\phi(e_4)=\phi(h_9)=-\phi(\hat{h}_9)=a$, a contradiction.  If $L(e')\neq \{a, b, -b\}$ for some $e'\in M$, let $M'$ be the unique perfect matching of $G-M-e_9$. Since $\{a, -a\}\not\subseteq L(h_9)$, applying the same argument to $M$ yields $L(h_9)=L(e')$, again a contradiction.

Suppose now that $\{a, -a\}\subseteq L(h_9)$. Write $L(h_9)=\{a,-a, c\}$ for some color $c$. Then $L(\hat{h}_9)=\{a,-a,-c\}$.
Let $M_1=\{e_1, e_5, e_9\}$ and assign color $a$ to each of $\{e_1, e_5, h_9\}$ and $-a$ to $\hat{h}_9$. Since $(G,\sigma)$ admits no $L$-edge-coloring, $(G-M_1, \sigma)$ admits no $L_1$-edge-coloring, where $L_1(e_i)=L(e_i)\setminus \{a\}$ $(i\in  \{2, 4, 7,8\})$ and $L_1(e_i)=L(e_i)\setminus \{a,-a\}$ $(i\in  \{3,6\})$. By Lemma \ref{le: cycle-1}-(2), one of $\{e_3, e_6\}$, say $e_6$, satisfies $\{a, -a\}\subseteq L(e_6)$. By symmetry, one of $\{e_7, e_8\}$, say $e_8$,  also satisfies $\{a, -a\}\subseteq L(e_8)$.
Now define an edge coloring $\psi$ of $(G-e_1, \sigma)$ by $\psi(e_3)=\psi(e_5)=\psi(e_7)=a$, $\psi(e_6)=\psi(e_8)=-a$ and $\psi(z)\in L(z)\setminus \{a, -a\}$ for $z\in \{e_2, e_4, h_9, \hat{h}_9\}$. Because $(G,\sigma)$ admits no $L$-edge-coloring, we must have $L(e_1)=\{a, \psi(e_2), \psi(e_4)\}$ and $-a\in L(e_2)\cap L(e_4)$.
By Claim \ref{cl: four edges}, $-a\in L(e_3)\cap L(e_5)\cap L(e_7)$  since $-a\notin L(e_1)$. Consequently, $(G,\sigma)$ admits an $L$-edge-coloring $\phi$ defined by $\phi(e_3)=\phi(e_4)=\phi(e_8)=a$, $\phi(e_2)=\phi(e_6)=\phi(e_7)=-a$ and $\phi(z)\in L(z)\setminus \{a, -a\}$ for $z\in \{e_1, e_5, h_9, \hat{h}_9\}$, which is a contradiction.

The proof of the lemma is complete.
\end{proof}


\section{Proof of Theorem \ref{maintheorem}} \label{s4: proof}
Before giving the detailed proof of Theorem~\ref{maintheorem}, we first outline the main steps. Let $(G,\sigma)$ be a minimum counterexample to the theorem. Using the
reducibility results in Section~\ref{s3: special}, we show that $(G,\sigma)$ contains neither an edge with small degree-sum nor an alternating cycle (see Claims \ref{cl: sumdegree} and \ref{alcycle1}). Next, by the structural lemma for
bounded treewidth graphs, we find a suitable bipartite subgraph in  $G,$ that  contains a copy of $K_{3,3}$. However, this copy is reducible by Lemma~\ref{le: K33}, which contradicts the minimality of $(G,\sigma)$.

We now proceed with the proof of Theorem~\ref{maintheorem}.

\noindent {\bf Proof of Theorem \ref{maintheorem}.}  Suppose, for contradiction, that $(G, \sigma)$ is a counterexample to Theorem \ref{maintheorem} with minimum $|E(G)|$. Then $\Delta\ge 3$ by Lemma \ref{le: cycle-1}-(1) and there exists a list edge assignment $L$ on $(G,\sigma)$ such that $|L(h)|=\Delta+1$ for each $h\in H(G)$ and $(G,\sigma)$ admits no $L$-edge-coloring.

For a subgraph $Q$ of $G$, let $\bar{Q}=G-E(Q)$ and define a function $f_Q: E(Q)\to \mathbb{Z}$ by
\begin{equation}\label{eq: fQ}
f_Q(e)=\Delta+1-(d_{\bar{Q}}(u)+d_{\bar{Q}}(v)), \forall e=uv\in E(Q).
\end{equation}
\setcounter{claim}{0}
\begin{claim}\label{cl: delete}
Let $Q$ be a proper subgraph of $G$. Then either $(Q,\sigma)$ is not $f_Q$-list-edge-colorable, or $tw(G)=4$ and $\Delta(G-e)=9$ for any $e\in E(Q)$.
\end{claim}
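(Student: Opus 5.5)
We argue by contradiction. Suppose $(Q,\sigma)$ is $f_Q$-list-edge-colorable, and that it is not the case that $tw(G)=4$ and $\Delta(G-e)=9$ for every $e\in E(Q)$. The aim is to build an $L$-edge-coloring of $(G,\sigma)$, which contradicts the choice of $L$.

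The first step is to colour $(\bar Q,\sigma)$ by minimality. Since $Q$ is a proper subgraph, $Q$ has at least one edge; we may assume this, because otherwise the claim is vacuous. Hence $\bar Q=G-E(Q)$ has fewer edges than $G$. We want $(\bar Q,\sigma)$, restricted to the lists $L$ with $|L(h)|=\Delta+1\ge \Delta(\bar Q)+1$, to be $L$-edge-colorable. The minimality of $(G,\sigma)$ gives this once $\bar Q$ satisfies the hypotheses of Theorem~\ref{maintheorem} with its own maximum degree, after deleting isolated vertices. Treewidth does not increase under subgraphs, so $tw(\bar Q)\le tw(G)$.

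There are three cases for the treewidth of $\bar Q$.
\begin{itemize}
\item If $tw(\bar Q)\le 2$, we handle it directly. Every graph of treewidth at most $2$ is a subgraph of a graph of treewidth $3$ with the same maximum degree; for instance, add a disjoint $K_4$, which raises $\Delta$ only if $\Delta(\bar Q)<3$. Small $\Delta$ is covered by Lemma~\ref{le: cycle-1} and path/tree arguments. Alternatively, Lang's result and the theorem itself apply to treewidth at most $3$ by the same minimality argument. I will phrase the minimum counterexample so that it covers all graphs of treewidth at most $3$, and treewidth $4$ with $\Delta\ge 10$.
\item If $tw(\bar Q)=3$ or less, minimality applies with bound $\Delta(\bar Q)+1\le\Delta+1$.
\item If $tw(\bar Q)=4$, then $tw(G)=4$ and $\Delta(G)\ge 10$. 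Minimality needs $\Delta(\bar Q)\ge 10$. Since $\bar Q\subseteq G-e$ for any $e\in E(Q)$, the failure case is $\Delta(\bar Q)\le 9$. If $\Delta(G-e)\ge 10$ we would still need $\Delta(\bar Q)\ge10$, which is not automatic.
\end{itemize}

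The third case is the main obstacle. To get around it, I would first apply minimality to $G-e$ for some $e\in E(Q)$. The alternative in the claim says that for some $e\in E(Q)$ we have $tw(G)\ne4$ or $\Delta(G-e)\ne 9$, i.e.\ $\Delta(G-e)\ge 10$ since $\Delta(G-e)\ge\Delta-1\ge 9$. Then $(G-e,\sigma)$ has an $L$-edge-coloring.

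A cleaner route is to note that $\Delta$-independence lets us pad $\bar Q$ with a disjoint star $K_{1,\Delta}$. This raises its maximum degree to $\Delta$ while keeping treewidth at most $4$ and having fewer edges than $G$ when... Since that edge count can fail, the safer approach is to restrict the coloring of $G-e$ (obtained by minimality, with $\Delta(G-e)\ge10$ or treewidth at most $3$) to $\bar Q\subseteq G-e$. This restriction gives an $L$-edge-coloring $\phi$ of $(\bar Q,\sigma)$ directly, so no padding is needed. If $tw(G)\le 3$, the same restriction works with no degree condition.

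The second step is to extend $\phi$ to $Q$. For each half-edge $h$ of $Q$ at vertex $u$ with $e_h=uv$, define $L'(h)$ to be $L(h)$ minus the colours $\phi(h')$ of half-edges $h'$ of $\bar Q$ at $u$, and minus $-\sigma(e_h)$ times the colours $\phi(h'')$ of half-edges $h''$ of $\bar Q$ at $v$. This keeps $L'(h)=-\sigma(e_h)L'(\hat h)$, because the forbidden set transforms consistently: negating through the sign of $e_h$ swaps the roles of the two ends. So $L'$ is a list edge assignment on $(Q,\sigma)$, with $|L'(h)|\ge \Delta+1-d_{\bar Q}(u)-d_{\bar Q}(v)=f_Q(e_h)$.

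By $f_Q$-colorability, $(Q,\sigma)$ has an $L'$-edge-coloring. Together with $\phi$, it yields an $L$-edge-coloring of $(G,\sigma)$: conflicts at a vertex between $Q$-half-edges and $\bar Q$-half-edges were removed by construction. This is the desired contradiction. The only delicate point is the sign bookkeeping in the removed set, which I would verify by the relation $\phi(\hat h)=-\sigma(e_h)\phi(h)$.
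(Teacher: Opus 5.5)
Your final argument is correct and is essentially the paper's proof: apply minimality to $G-e$ for an edge $e\in E(Q)$ with $tw(G)\le 3$ or $\Delta(G-e)\ge 10$, restrict that colouring to $\bar Q\subseteq G-e$, and extend to $Q$ via the reduced lists of size at least $f_Q(e_h)$. The earlier detours (colouring $\bar Q$ directly, padding with a star) are unnecessary, while your choice to take the minimal counterexample over graphs of treewidth at most $3$ (so that $tw(G-e)<3$ is covered) makes explicit a point the paper leaves implicit.
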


\noindent {\it Proof of Claim \ref{cl: delete}.} Suppose to the contrary that $(Q,\sigma)$ is $f_Q$-list-edge-colorable and that either $tw(G)=3$ or $tw(G)=4$ and there exists some edge $e_0\in E(Q)$ such that $\Delta(G-e_0)\ge 10$. If $tw(G)=3$, we choose an arbitrary edge $e_0\in E(Q)$; if $tw(G)=4$, we choose such an edge $e_0$ with $\Delta(G-e_0)\ge 10$. In both cases, $G-e_0$ still satisfies the hypothesis of Theorem~\ref{maintheorem}. Hence, by the minimality of $G$, $(G-e_0,\sigma)$ admits an $L|_{H(G-e_0)}$-edge-coloring $\phi'$. Uncoloring every half-edge in $H(Q-e_0)$ from $\phi'$, we obtain an $L|_{H(\bar{Q})}$-edge-coloring $\phi$ of $(\bar{Q},\sigma)$.  Define a mapping $L_{Q}: H(Q) \to 2^{\mathbb{Z}}$ by
$$
L_{Q}(h)=L(h)\setminus \left(\{\phi(h) : h\in H_{\bar{Q}}(v)\}\cup \{-\sigma(e_h)\phi(h) : h\in H_{\bar{Q}}(u)\}\right),\ \forall h\in H(Q),
$$
where $v$ is the end of $h$ and $u$ is another end of the edge $e_h$. Then $L_Q$ is a list edge assignment on $(Q,\sigma)$ with $|L_Q(h)|\ge f_Q(e_h)$ for every $h\in H(Q)$. Since $(Q,\sigma)$ is $f_Q$-list-edge-colorable, it admits an $L_Q$-edge-coloring. Combining this coloring with $\phi$ yields an $L$-edge-coloring of the entire signed graph $(G, \sigma)$, a contradiction. Hence the claim holds.
\hfill $\Box$

\medskip

Let $S=\{uv\in E(G) : d_G(u)+d_G(v)\leq \Delta+2\}$.

\begin{claim}\label{cl: sumdegree}
 $|S|\leq 1$. Moreover, if $S=\{uv\}$ with $d_G(u)\ge d_G(v)$, then $tw(G)=4$, $\Delta=10$, $V_{\Delta}(G)=\{u\}$ and $V_1(G)=\{v\}$.
\end{claim}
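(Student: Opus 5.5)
We will derive everything from Claim~\ref{cl: delete} applied to the single-edge subgraph $Q=uv$ for $uv\in S$. For such $Q$, $\bar Q=G-uv$, so $d_{\bar Q}(u)+d_{\bar Q}(v)=d_G(u)+d_G(v)-2\le\Delta$. Hence $f_Q(uv)\ge 1$. A single edge with list size at least $1$ is trivially list-edge-colorable: pick any $c\in L(h)$ and set $\hat h$ to $-\sigma(uv)c$. So $(Q,\sigma)$ is $f_Q$-list-edge-colorable, and Claim~\ref{cl: delete} forces $tw(G)=4$ and $\Delta(G-uv)=9$.

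First we rule out $tw(G)=3$. In that case every edge in $S$ immediately yields a contradiction, so $S=\emptyset$. In the treewidth-$4$ case, $\Delta\ge 10$ and $\Delta(G-uv)=9$. Deleting one edge lowers the maximum degree by at most one, so $\Delta=10$. Moreover, every vertex of degree $10$ in $G$ must lose degree when $uv$ is deleted, so $V_\Delta(G)\subseteq\{u,v\}$.

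Next we pin down the degrees. Suppose both $u$ and $v$ had degree $10$; then $d(u)+d(v)=20>\Delta+2=12$, contradicting $uv\in S$. So exactly one endpoint has degree $\Delta$ (at least one must, since $\Delta(G-uv)<\Delta$). With $d_G(u)\ge d_G(v)$, this endpoint is $u$, giving $V_\Delta(G)=\{u\}$ and $d(v)\le 12-10=2$.

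To show $d(v)=1$, suppose $d(v)=2$, and take the two-edge path $Q$ made of $uv$ and the other edge $vw$ at $v$. We intend to show $Q$ is $f_Q$-colorable and then apply Claim~\ref{cl: delete} to the edge $vw\in E(Q)$. Deleting $vw$ leaves $u$ of degree $10$, so $\Delta(G-vw)=10\ne 9$, which is the desired contradiction. For colorability, compute $f_Q(uv)=11-(9+0)=2$ and $f_Q(vw)=11-(0+d(w)-1)=12-d(w)\ge 2$, since $d(w)\le 10$ and in fact $d(w)\le 9$ because $w\ne u$. A path of two edges with lists of size at least $2$ is colorable greedily: color $vw$ first, then $uv$ avoids the single conflicting color at $v$. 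Hence $d(v)=1$ and $V_1(G)\ni v$.

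The remaining and main obstacle is showing $|S|\le1$ and $V_1(G)=\{v\}$. If $S$ contained two edges, each would force its degree-$\Delta$ endpoint to be the unique vertex of $V_\Delta$. So both edges are $uv$ and $uv'$ with pendant vertices $v$ and $v'$. But then deleting $uv$ leaves $\Delta(G-uv)$ dependent on $u$ alone, which is $9$. Instead we apply Claim~\ref{cl: delete} to the star $Q=\{uv,uv'\}$ and the edge $uv$. Here $f_Q(uv)=11-8=3$ and likewise $f_Q(uv')=3$, so $Q$ is colorable, and the claim forces $\Delta(G-uv)=9$; this is consistent, so this route gives no contradiction by itself. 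We need to refine it: choose $e\in E(Q)$ whose deletion keeps $\Delta=10$. That can fail here, so instead we observe that any other degree-$1$ vertex $v'$ gives $d(u)+d(v')\le 11\le \Delta+2$ if $v'\sim u$, hence $uv'\in S$. This leaves two points to handle by enlarging $Q$ with an edge away from $u$ so that some deletable edge avoids $u$: a second $S$-edge, and a degree-$1$ vertex not adjacent to $u$, where the neighbor $w\neq u$ has $d(w)\le 9$ so that edge lies in $S$ only if $d(w)\le 11$ (always). Such a pendant edge $v'w$ already lies in $S$ with $\Delta(G-v'w)=10$, which contradicts the first paragraph. The case where both pendant vertices hang on $u$ is where I expect the real difficulty to lie, likely resolved by taking $Q$ to include an edge at a neighbor of $u$.
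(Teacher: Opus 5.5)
You prove everything you claim correctly. Claim~\ref{cl: delete} applied to a single edge of $S$ gives $tw(G)=4$, $\Delta=10$ and $V_\Delta(G)=\{u\}$, so every edge of $S$ is incident with $u$. Your two-edge path rules out $d(v)=2$; the paper uses the minimum-degree-sum choice of $uv$ instead, and most directly the other edge $vw$ would itself lie in $S$ while missing $u$. Degree-$1$ vertices not adjacent to $u$ are also correctly excluded.

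There is still a genuine gap: you never rule out that $u$ has two pendant neighbours $v,v'$. That is exactly the case $|S|\ge 2$, $|V_1(G)|\ge 2$. Every edge $e$ at $u$ has $\Delta(G-e)=9$, so no $Q$ made of edges at $u$ contradicts Claim~\ref{cl: delete}. Your suggested remedy of adding an edge $ab$ away from $u$ does not work in any evident way. Since $ab\notin S$, we have $d(a)+d(b)\ge 13$. If $d(a)=d(b)=9$, then $f_Q(ab)\ge 1$ forces at least six further edges of $Q$ at $a$ or $b$, and their own $f_Q$-values depend on degrees you do not control.

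The paper's proof has the same hole. After obtaining $d(v)=1$ it simply asserts $V_1(G)=\{v\}$ and $S=\{uv\}$, although the minimum-degree-sum choice does not exclude a second edge $uv'$ with the same sum $11$. The fact $|V_1(G)|=1$ is used later, in Claim~\ref{K}.

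One way to close the gap is to apply minimality to a graph that is not a subgraph of $G$. Let $v_1,\dots,v_p$ $(p\ge 2)$ be the pendant neighbours of $u$. By what you proved, these are all the degree-$1$ vertices, and every edge avoiding $u$ has degree sum at least $13$.

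First find a suitable high-degree vertex $z$. Since $tw(G)=4$, some edge avoids $u$. So $G'=G-\{v_1,\dots,v_p\}$, which has treewidth at most $4$, has a non-isolated vertex $y$ with $d_{G'}(y)\le 4$.
If $y\ne u$, then $2\le d_G(y)\le 4$, so $y$ has a neighbour $z\ne u$ with $d_G(z)\ge 9$.
If $y=u$, then $p\ge 6$, and the end $z$ of larger degree of an edge avoiding $u$ has $d_G(z)\ge 7$.
In both cases $z\notin\{u,v_1,\dots,v_p\}$ and $d_G(z)\ge 11-p$.

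Now delete $v_1,\dots,v_p$ and attach $10-d_G(z)\le p-1$ new pendant edges at $z$. The new signed graph has fewer edges, maximum degree $10$, and treewidth at most $4$. It therefore satisfies the theorem's hypothesis in the same sense as $G-e_0$ does in Claim~\ref{cl: delete}. By minimality it is colorable from any lists of size $11$, in particular from lists agreeing with $L$ on $H(G')$. Restrict this coloring to $G'$. Then color $uv_1,\dots,uv_p$ greedily: each half-edge at $u$ sees at most $9$ colors, and $v_j$ imposes no constraint. This gives an $L$-edge-coloring of $G$, a contradiction, and hence $|S|\le 1$ and $V_1(G)=\{v\}$.
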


\noindent {\it Proof of Claim \ref{cl: sumdegree}.}
Suppose there exists an edge $uv\in E(G)$ such that $d_G(u)+d_G(v)\leq \Delta+2$. Choose such an edge $uv$ with  $d_G(u)+d_G(v)$ minimum, and let $Q=uv$. Then $(Q,\sigma)$ is $f_Q$-list-edge-colorable because  $f_Q(uv)=(\Delta+1)-(d_{\bar{Q}}(u)+d_{\bar{Q}}(v))=(\Delta+1)-(d_G(u)+d_G(v)-2)\ge 1$.
By Claim \ref{cl: delete}, we have $tw(G)=4$ and $\Delta(G-uv)=9$. Then $\Delta=\Delta(G-uv)+1=10$. Without loss of generality, assume that $d_G(u)\ge d_G(v)$. It follows that $V_{\Delta}(G)=\{u\}$ and $d_G(v)\leq 2$. If $d_G(v)=2$, then by the choice of $uv$, the other neighbor of $v$ would also have degree $\Delta$, a contradiction. Thus $V_1(G)=\{v\}$ and $S=\{uv\}$ satisfies the required conditions. This completes the proof.
\hfill$\Box$

\medskip

A cycle $C=v_1v_2\cdots v_{2k}$ is called an \textit{alternating cycle} if $\{v_1, v_3, \dots, v_{2k-1}\}\subseteq V_3(G)$ and $\{v_2, v_4, \dots, v_{2k}\}\subseteq V_\Delta(G)$.

\begin{claim}\label{alcycle1}
There is no alternating cycle in $G$.
\end{claim}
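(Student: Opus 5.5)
The plan is to suppose that $G$ contains an alternating cycle $C=v_1v_2\cdots v_{2k}$, with $v_1,v_3,\dots\in V_3(G)$ and $v_2,v_4,\dots\in V_\Delta(G)$. I would find a proper subgraph $Q\supseteq C$ that is $f_Q$-list-edge-colorable and then contradict Claim~\ref{cl: delete}. Note that $k\ge 2$, since $G$ is simple.

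\textbf{Ruling out the exceptional alternative of Claim~\ref{cl: delete}.} That alternative requires $tw(G)=4$ and $\Delta(G-e)=9$ for every $e\in E(Q)$, so $\Delta=10$. Deleting a single edge $e$ can then lower the maximum degree only if every vertex of $V_\Delta(G)$ is an end of $e$. Take $e=v_1v_2\in E(C)$. The cycle contains a second $\Delta$-vertex $v_4\neq v_2$, and $v_4$ is not an end of $e$, so $\Delta(G-e)=\Delta$. Hence, once $Q$ contains $C$, it suffices to show that $(Q,\sigma)$ is $f_Q$-list-edge-colorable.

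\textbf{Choice of $Q$.} Since $d_G(v_1)=3$, the vertex $v_1$ has a third edge $e_0=v_1w$ with $w\notin\{v_2,v_{2k}\}$. Put $Q=C+e_0$. This is a proper subgraph of $G$, because $v_2$ has degree $\Delta\ge 3$ and so has an edge outside $C$. Relabel $C$ as $12\cdots n1$ with $1=v_1$ and $n=2k\ge 4$. Then $e_0$ is either a pendant edge $10$ (when $w\notin V(C)$) or a chord $1t$ with $t\in[3,n-1]$. By Lemma~\ref{le: cycle+e0} it is enough to check
\[
f_Q(e)\ge d_Q(i)+d_Q(j)-2\quad\text{for every } e=ij\in E(Q).
\]
Recall that $d_{\bar Q}(x)=d_G(x)-d_Q(x)$. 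The required values are $2$ on ordinary cycle edges, $3$ on cycle edges at an end of $e_0$, and $2$ or $4$ on $e_0$ in the pendant or chord case respectively.

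\textbf{Verifying the inequality.}
\begin{itemize}
\item On a cycle edge joining a degree-$3$ vertex and a $\Delta$-vertex, neither of them an end of $e_0$, we have $f_Q=\Delta+1-(1+\Delta-2)=2$.
\item At vertex $1$ we have $d_{\bar Q}(1)=0$. A cycle edge from $1$ to a $\Delta$-vertex other than $t$ gets $f_Q=3$. If that neighbour is $t$ itself, then $f_Q=\Delta+1-(\Delta-3)=4$.
\item In the pendant case, $f_Q(e_0)=\Delta+2-d_G(w)\ge 2$.
\item In the chord case, $f_Q(e_0)=\Delta+1-(d_G(t)-3)\ge 4$.
\item For cycle edges at $t$: if $t\in V_\Delta$, its cycle edges to degree-$3$ vertices other than $1$ get $\Delta+1-(1+\Delta-3)=3$. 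If $t\in V_3$, then $d_{\bar Q}(t)=0$ and its cycle edges get $\Delta+1-(\Delta-2)=3$.
\end{itemize}

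Thus every edge satisfies the hypothesis, and Lemma~\ref{le: cycle+e0} gives that $(Q,\sigma)$ is $f_Q$-list-edge-colorable for any signature. This contradicts Claim~\ref{cl: delete}, since the exceptional alternative was excluded above.

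\textbf{Expected obstacle.} I expect the only delicate point to be the case bookkeeping for the chord. One must check that the chord endpoint $t$ cannot produce a degree count that violates the bound, in particular when $t$ is itself a degree-$3$ vertex, and confirm that $t\in[3,n-1]$ as Lemma~\ref{le: cycle+e0} requires. The latter follows because $G$ is simple, so $t$ is not a cycle neighbour of $1$.
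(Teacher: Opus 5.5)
Your proposal is correct and follows essentially the paper's argument: the same $Q=C+v_1u$, Lemma~\ref{le: cycle+e0}, and a contradiction with Claim~\ref{cl: delete}, with the exceptional case excluded because a second $\Delta$-vertex on $C$ survives the deletion. The only difference is that the paper first uses Claim~\ref{cl: sumdegree} to get $S=\emptyset$, hence $d_G(u)=\Delta$, so that $f_Q$ coincides exactly with the lemma's function. You instead verify $f_Q\ge d_Q(i)+d_Q(j)-2$ for an arbitrary third neighbour, including a degree-$3$ chord endpoint, and so you do not need that step.
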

\noindent {\it Proof of Claim \ref{alcycle1}.}
Suppose to the contrary that $G$ contains an alternating cycle $C=v_1v_2\cdots v_{2k}$ with $\{v_1, v_3, \dots, v_{2k-1}\}\subseteq V_3(G)$ and $\{v_2, v_4, \dots, v_{2k}\}\subseteq V_\Delta(G)$. Then $|V_{\Delta}(G)|\ge k\ge 2$ and  by Claim \ref{cl: sumdegree}, we must have  $S=\emptyset$.
Let $u$ be the neighbor of $v_1$ distinct from $v_2$ and $v_{2k}$, and set $Q=C+v_1u$.
Then $u\in V_\Delta(G)$ since $S=\emptyset$, and $\Delta(G-e)=\Delta$ for any $e\in E(Q)$. 
Whether $u\notin V(C)$ or $u\in V(C)$, the graph $Q=C+v_1u$ is a cycle with
one additional pendant edge or chord. Moreover, a direct computation using
Eq.~~(\ref{eq: fQ}) shows that
$f_Q(e)=d_Q(x)+d_Q(y)-2$ for every edge $e=xy\in E(Q)$.
Thus $Q$ satisfies the hypotheses of Lemma~\ref{le: cycle+e0}.
Hence $(Q,\sigma)$ is $f_Q$-list-edge-colorable, contradicting Claim~\ref{cl: delete}.\hfill $\Box$

\begin{claim}\label{cl: D>3}
 $\Delta\ge 4$.
 \end{claim}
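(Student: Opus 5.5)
The plan is to assume $\Delta=3$ and derive a contradiction from Claims~\ref{cl: sumdegree} and~\ref{alcycle1}. We already know $\Delta\ge 3$ from the beginning of the proof. So suppose $\Delta=3$. Since the treewidth-$4$ case of Theorem~\ref{maintheorem} requires $\Delta\ge 10$, we must have $tw(G)=3$.

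\textbf{Step 1: $G$ is cubic on its non-isolated vertices.} By Claim~\ref{cl: sumdegree}, a nonempty $S$ would force $\Delta=10$, which is impossible. Hence $S=\emptyset$, so $d_G(u)+d_G(v)\ge \Delta+3=6$ for every edge $uv$. Since all degrees are at most $3$, both ends of every edge have degree exactly $3$. Isolated vertices play no role, so every non-isolated vertex of $G$ lies in $V_3(G)=V_\Delta(G)$. In particular, $G$ has minimum degree $3$ after discarding isolated vertices, and $E(G)\neq\emptyset$.

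\textbf{Step 2: find an even cycle.} I will use the classical fact that every graph of minimum degree at least $3$ contains an even cycle. Take a longest path $P=u_0u_1\cdots u_r$ in a nontrivial component. By maximality, all three neighbours of $u_0$ lie on $P$; call them $u_1,u_i,u_j$ with $1<i<j$. This gives three cycles:
\begin{itemize}
\item $u_0u_1\cdots u_iu_0$, of length $i+1$;
\item $u_0u_1\cdots u_ju_0$, of length $j+1$;
\item $u_0u_i\cdots u_ju_0$, of length $j-i+2$.
\end{itemize}
If $i+1$ and $j+1$ are both odd, then $i$ and $j$ are both even, so $j-i+2$ is even. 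Hence one of these three cycles is even.

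\textbf{Step 3: contradiction.} Let $C=v_1v_2\cdots v_{2k}$ be the even cycle from Step 2. All its vertices have degree $3=\Delta$, so $\{v_1,v_3,\dots,v_{2k-1}\}\subseteq V_3(G)$ and $\{v_2,v_4,\dots,v_{2k}\}\subseteq V_\Delta(G)$ hold simultaneously. Thus $C$ is an alternating cycle, contradicting Claim~\ref{alcycle1}. Therefore $\Delta\ge 4$.

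The only point needing care is Step 1's use of Claim~\ref{cl: sumdegree} to exclude $S\neq\emptyset$, together with the observation that when $\Delta=3$ the two vertex classes in the definition of an alternating cycle coincide. The even-cycle lemma in Step 2 is standard, so I expect no real obstacle.
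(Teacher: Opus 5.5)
Your proof is correct, but it closes the argument differently from the paper. Both proofs use Claim~\ref{cl: sumdegree} to see that $G$ is cubic, and both use the same longest-path trick to produce three cycles forming a theta subgraph.

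\textbf{The paper's route.} It applies the parity argument to the number of negative edges. The edge set of one of the three cycles is the symmetric difference of the other two, so one of them is balanced. A balanced cycle $Q$ in a cubic graph has $f_Q\equiv 2$, so Lemma~\ref{le: cycle-1}-(2) together with Claim~\ref{cl: delete} gives the contradiction directly.

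\textbf{Your route.} You apply the parity argument to cycle lengths and obtain an even cycle. You then observe that when $\Delta=3$ the classes $V_3(G)$ and $V_\Delta(G)$ coincide, so every even cycle is alternating and Claim~\ref{alcycle1} applies. Its proof indeed does not assume $\Delta\ge 4$:
\begin{itemize}
\item $|V_\Delta(G)|\ge 2$ forces $S=\emptyset$;
\item the third neighbour of $v_1$ gives a pendant edge or a chord $1t$ with $t\in[3,n-1]$;
\item $f_Q(e)=d_Q(x)+d_Q(y)-2$ holds because every edge has degree sum $6=3+\Delta$.
\end{itemize}

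\textbf{What each approach buys.} Yours removes all signature considerations from this claim and makes it a purely graph-theoretic consequence of Claim~\ref{alcycle1}. The cost is that it relies on the stronger reducible configuration of Lemma~\ref{le: cycle+e0}. There the extra edge supplies the slack needed to handle unbalanced even cycles, which by Lemma~\ref{le: cycle-2} are not $f_Q$-list-edge-colorable on their own when $f_Q\equiv 2$. The paper's route uses only the weakest reducible configuration, a balanced cycle, and is independent of Claim~\ref{alcycle1}.
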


\noindent {\it Proof of Claim \ref{cl: D>3}.} Suppose $\Delta\leq 3$. By Claim \ref{cl: sumdegree}, $G$ must be cubic.
Let $P=v_1v_2\cdots v_k$ be a longest path in $G$.
Then all neighbors of $v_k$ belong to $\{v_1, \dots, v_{k-1}\}$. Because $d_G(v_k)=3$, $v_k$ has two neighbors in $\{v_1, \dots, v_{k-2}\}$, denoted by $v_i, v_j$ with $i<j$. Hence at least one of the three cycles $v_iPv_jv_kv_i$, $v_iPv_kv_i$ and $v_jPv_kv_j$ is balanced. Denote this balanced cycle by $Q$.  
For every $e\in E(Q)$, Eq.~(\ref{eq: fQ}) gives $f_Q(e)=(\Delta+1)-(3-2)-(3-2)=2$.
Since $Q$ is a balanced cycle, condition (2) of Lemma~\ref{le: cycle-1}
is satisfied. Hence $(Q,\sigma)$ is $f_Q$-list-edge-colorable,
contradicting Claim~\ref{cl: delete}. Therefore $\Delta\ge 4$.
\hfill$\Box$

\medskip

Let $\ell=tw(G)$. Note that $\Delta\ge 4=\ell+1$ by Claim \ref{cl: D>3} when $\ell=3$, and $\Delta\ge 10>\ell+1$ when $\ell=4$.
By Lemma \ref{lem:tw}, there exist two non-empty disjoint subsets $U=\{u_1,\dots, u_s\}$ and $W=\{w_1, \dots, w_t\}$ of $V(G)$ and a vertex $x \notin W \cup U $ satisfying:
\begin{itemize}
\item[\rm (1)]  $N(W) \subseteq U \cup \{x\}\cup \cup_{i\le \ell}V_i(G)$;
\item[\rm (2)] $d_G(x) \geq \ell+1$ and $d_G(w)\leq \ell$ for each $w \in W$;
\item[\rm (3)]  $W \subseteq N(x) \subseteq W \cup U$;
\item[\rm (4)]  $s\leq\ell$.
 \end{itemize}

\begin{claim}\label{cl: tw=4}
$\ell=4$.
 \end{claim}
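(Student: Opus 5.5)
The plan is to argue by contradiction: assume $\ell=3$ and use the structure from Lemma~\ref{lem:tw} to produce an alternating cycle, which Claim~\ref{alcycle1} forbids. When $\ell=3$, Claim~\ref{cl: sumdegree} gives $S=\emptyset$, because a nonempty $S$ forces $tw(G)=4$. Hence every edge $yz$ satisfies $d_G(y)+d_G(z)\ge \Delta+3$.

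First, for any $w\in W$ we have $xw\in E(G)$ by (3), and $d_G(w)\le 3$ by (2). So $d_G(x)\ge \Delta+3-d_G(w)\ge \Delta$, which gives $d_G(w)=3$ and $d_G(x)=\Delta$. The same computation for any neighbour $y$ of $w$ gives $d_G(y)\ge \Delta+3-3=\Delta\ge 4$, using Claim~\ref{cl: D>3}. Thus no neighbour of $w$ lies in $\cup_{i\le 3}V_i(G)$, and by (1) we get $N(w)\subseteq U\cup\{x\}$, with every vertex of $N(w)$ in $V_\Delta(G)$.

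Consider the bipartite subgraph $H$ between $W$ and $U\cup\{x\}$. Every $w\in W$ has exactly $3$ neighbours in $H$, and these lie in $V_\Delta(G)$. The other side has at most $|U|+1\le 4$ vertices by (4). If $|W|\ge 2$, two vertices $w_1,w_2\in W$ each choose $3$ neighbours among at most $4$ vertices, so they share two neighbours $a,b$. Then $w_1aw_2b$ is a $4$-cycle alternating between $V_3(G)$ and $V_\Delta(G)$. This alternating cycle contradicts Claim~\ref{alcycle1}.

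It remains to treat $|W|=1$, say $W=\{w\}$. Then (3) gives $\Delta=d_G(x)\le |W|+|U|\le 4$. So $\Delta=4$, $|U|=3$ and $N(x)=\{w\}\cup U$, and $w$ is adjacent to $x$ and to two vertices $u_1,u_2\in U$ of degree $4$. I expect this degenerate case to be the main obstacle, since the alternating-cycle argument alone does not finish it.

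For this case I would first re-examine the proof of Lemma~\ref{lem:tw}, where $W$ comes from the vertices private to a leaf-end of the tree decomposition. I would try to choose $x$ so that it has at least two low-degree private neighbours, which returns us to the $|W|\ge 2$ case. If that fails, I would look for a reducible configuration inside $G[\{x,w,u_1,u_2,u_3\}]$ together with nearby edges. The natural candidates are a cycle through $w,x,u_i$ with a pendant edge or chord, handled by Lemma~\ref{le: cycle+e0} via Claim~\ref{cl: delete}, or a balanced cycle, handled by Lemma~\ref{le: cycle-1}(2). In each candidate I would compute $f_Q$ with Eq.~(\ref{eq: fQ}), using $\Delta=4$ and the degrees $d_G(w)=3$ and $d_G(x)=d_G(u_1)=d_G(u_2)=4$, and check the list-size hypotheses. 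This contradicts Claim~\ref{cl: delete} and forces $\ell=4$.
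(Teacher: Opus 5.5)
\textbf{There is a genuine gap: the residual case $|W|=1$ is never closed.}

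Your reduction is correct and matches the paper's up to that case. From $S=\emptyset$ you get $W\subseteq V_3(G)$, $d_G(x)=\Delta$ and $N(w)\subseteq U\cup\{x\}$. Two vertices of $W$ would then span an alternating $4$-cycle; the paper uses $xw_1uw_2x$ where you use $w_1aw_2b$, which is immaterial. So $W=\{w\}$, $\Delta=4$, $|U|=3$, $N(x)=\{w\}\cup U$ and $N(w)=\{x,u_1,u_2\}$ with $d_G(u_1)=d_G(u_2)=4$.

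For this case you list candidate configurations but never pick one and verify it, and the check is not routine because most of your candidates fail:
\begin{itemize}
\item For the triangle $xu_1w$, the values of $f_Q$ on $(xu_1,xw,wu_1)$ are $(1,2,2)$.
\item For the $4$-cycle $xu_1wu_2x$, $f_Q(xu_i)=5-(2+2)=1$.
\item Consequently neither part of Lemma~\ref{le: cycle-1} applies to either cycle.
\item Adding the pendant edge $xu_3$ leaves three mutually adjacent edges at $x$ with $f_Q$-values $2,2,5-d_G(u_3)\le 2$. That $(Q,\sigma)$ is not $f_Q$-list-edge-colorable, so Claim~\ref{cl: delete} gives nothing.
\end{itemize}
Re-opening the proof of Lemma~\ref{lem:tw} is speculative, since that lemma is quoted from the literature.

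The configuration that works, and the one the paper uses, is $Q=xu_1wu_2x+xw$, the $4$-cycle with the chord $xw$. Here $d_{\bar Q}(x)=1$, $d_{\bar Q}(w)=0$ and $d_{\bar Q}(u_i)=2$. Hence $f_Q(xw)=4$, $f_Q(wu_i)=3$ and $f_Q(xu_i)=2$, and Claim~\ref{cl: delete} yields the contradiction once $(Q,\sigma)$ is shown to be $f_Q$-list-edge-colorable.

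Note that the \emph{statement} of Lemma~\ref{le: cycle+e0} asks for $3$ on all four cycle edges, so a literal hypothesis check fails on $xu_1$ and $xu_2$. What actually does the job is the monomial $x_0^3x_1x_2x_3x_4^2$ from its proof, with labels $1=w$, $2=u_1$, $3=x$, $4=u_2$ and $e_0=wx$. Its coefficient is $1$ for every signature. It only needs $f\ge 4$ on the chord, $f\ge 3$ on $e_4=u_2w$, and $f\ge 2$ on $e_1,e_2,e_3$, all of which $f_Q$ satisfies. Without this step your argument does not prove the claim.
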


\noindent {\it Proof of Claim \ref{cl: tw=4}.} Suppose to the contrary that $\ell=3$. From (2) and Claim \ref{cl: sumdegree}, we obtain that $W\subseteq V_3(G)$ and $d_G(x)=\Delta$.
By (3) and (4), we have $s+t\ge d_G(x)=\Delta\ge 4$ and $s\leq 3$. Note that $V_{3}(G)$ is an independent set of $G$.

If $t\geq2$, then $w_1$ and $w_2$ have a common neighbor $u\in U$, and so $xw_1uw_2x$ is an alternating cycle,  contradicting Claim \ref{alcycle1}. Thus, $t=1$.
Consequently, $s=3$ and $\Delta=d_G(x)=4$, moreover, $N(x)=U\cup W$. Without loss of generality, assume that $u_1, u_2$ are the neighbors of $w_1$. Let $Q=xu_1w_1u_2x+xw_1$. Then $Q$ is a cycle with one additional chord.
By Eq. (\ref{eq: fQ}), we have
$$
f_Q(e)= \left\{
\begin{array}{ll}
(\Delta+1)-(d_G(x)-3)-(d_G(w_1)-3)=4 & \mbox{if $e=xw_1$};\\
(\Delta+1)-(d_G(x)-3)-(\Delta-2)=2 & \mbox{if $e\in \{xu_1, xu_2\}$};\\
(\Delta+1)-(d_G(w_1)-3)-(\Delta-2)=3 & \mbox{if $e\in \{w_1u_1, w_1u_2\}$}.
\end{array}
\right.
$$
Thus the values of $f_Q$ coincide with those required in Lemma~\ref{le: cycle+e0}.
Hence Lemma~\ref{le: cycle+e0} implies that $(Q,\sigma)$ is
$f_Q$-list-edge-colorable, contradicting Claim~\ref{cl: delete}. This completes the proof. \hfill$\Box$

\medskip

Recall that $\Delta\ge 10$ by Claim \ref{cl: tw=4} and the hypothesis of the theorem. Let $W_3 = W\cap V_3(G)$ and $W_4 = W\cap V_4(G)$.

\begin{claim}\label{nw}
$|W_3|\leq2$.
\end{claim}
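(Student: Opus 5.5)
Suppose $|W_3|\ge 3$; we derive a contradiction with Claim \ref{alcycle1} or Claim \ref{cl: sumdegree}. We are in the case $\ell=4$, $\Delta\ge 10$.

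\emph{Step 1: $d_G(x)=\Delta$ and the neighbours of a vertex of $W_3$.} If $\Delta\ge 11$, or if $\Delta=10$ and $S=\emptyset$, Claim \ref{cl: sumdegree} says every edge $uv$ has $d_G(u)+d_G(v)\ge \Delta+3$. Take $w\in W_3$. Each neighbour of $w$ has degree at least $\Delta$, so it has degree exactly $\Delta$. In particular $d_G(x)=\Delta$, since $x$ is adjacent to $w$ by property (3). Every neighbour of $w$ has degree $\Delta\ge 10>\ell$, so it is not in $\cup_{i\le 4}V_i(G)$. By property (1), $N(w)\subseteq U\cup\{x\}$. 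Hence $w$ has exactly two neighbours in $U$, and each of them lies in $V_\Delta(G)$.

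\emph{Step 2: the exceptional case.} Suppose $S=\{uv\}$ with $\Delta=10$, $V_\Delta(G)=\{u\}$ and $V_1(G)=\{v\}$. Every vertex of $W_3$ has degree $3$, so it is not the degree-$1$ vertex $v$. Its neighbours therefore have degree at least $\Delta+3-3=\Delta$, i.e.\ they lie in $V_\Delta(G)=\{u\}$. A degree-$3$ vertex would then need three distinct neighbours in a one-element set, which is impossible. So $W_3=\emptyset$ in this case, and we may assume $S=\emptyset$.

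\emph{Step 3: pigeonhole on $U$.} Each $w\in W_3$ has exactly two neighbours in $U$, and $|U|=s\le 4$. This gives at least three $2$-subsets of a set of size at most $4$.
\begin{itemize}
\item If two vertices $w_i,w_j\in W_3$ share a neighbour $u\in U$, then $xw_iuw_jx$ is a $4$-cycle. Its vertices alternate between $V_3(G)$ ($w_i,w_j$) and $V_\Delta(G)$ ($x,u$), so it is an alternating cycle, contradicting Claim \ref{alcycle1}.
\item Otherwise the $2$-subsets are pairwise disjoint. Three pairwise disjoint $2$-subsets need at least $6>4$ elements, a contradiction.
\end{itemize}
Hence $|W_3|\le 2$.

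The only delicate point is Step 2. It is handled by the observation that the unique vertex of degree $\Delta$ cannot supply three distinct neighbours to a degree-$3$ vertex.
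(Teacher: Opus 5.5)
Your proof is correct and follows the paper's argument. Each vertex of $W_3$ has two neighbours in $U$ and $|U|\le 4$, so pigeonhole gives a common neighbour $u$, and the alternating $4$-cycle $xw_iuw_jx$ contradicts Claim~\ref{alcycle1}. Your Steps 1--2 also make explicit what the paper leaves implicit: Claim~\ref{cl: sumdegree} forces $N(w)\subseteq U\cup\{x\}$ and $x,u\in V_\Delta(G)$, and it gives $W_3=\emptyset$ in the exceptional case $S\neq\emptyset$.
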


\noindent {\it Proof of Claim \ref{nw}.}
Suppose, for contradiction, that $|W_3|\geq3$. Let  $\{w_1,w_2,w_3\}\subseteq W_3$. Since $N(W)\subseteq U\cup \{x\}$ and $|U|\leq4$, by the pigeonhole principle, at least two of $\{w_1, w_2, w_3\}$, say $w_1, w_2$, share a common neighbor $u\in U$. Then $xw_1uw_2x$ forms an alternating cycle, contradicting Claim \ref{alcycle1}. Hence $|W_3|\leq2$.
\hfill$\Box$
\medskip

For disjoint subsets $X, Y\subseteq V(G)$, let $E(X,Y)$ denote the set of edges of $G$ with one end in $X$ and the other end in $Y$.

\begin{claim}\label{K}
The bipartite graph $B$ induced by the edge set $E(W, \{x\} \cup U)$ contains a $K_{3,3}$ as a subgraph.
\end{claim}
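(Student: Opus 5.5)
The plan is to show that every vertex of $W$ has almost all of its neighbours in $\{x\}\cup U$. Then a counting (pigeonhole) argument over pairs of vertices of $U$ produces three vertices of $W$ with three common neighbours in $\{x\}\cup U$; together these form the required $K_{3,3}$ in $B$. Throughout, $\ell=4$ and $\Delta\ge 10$ by Claim~\ref{cl: tw=4}.

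\textbf{Step 1 (local structure of $W$).} Take $w\in W$, so $d_G(w)\le 4$. Any neighbour $y$ of $w$ with $d_G(y)\le 4$ gives an edge with $d_G(w)+d_G(y)\le 8\le \Delta+2$, i.e. an edge of $S$. By Claim~\ref{cl: sumdegree} there is at most one such edge, and if it exists its ends have degrees $\Delta$ and $1$. Together with property (1), this gives $N(w)\subseteq U\cup\{x\}$ for every $w\in W$, apart from the degenerate case of the $S$-edge, which lies inside $E(W,\{x\}\cup U)$ anyway.

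Moreover, every edge $wy$ not in $S$ satisfies $d_G(y)\ge \Delta+3-d_G(w)$. Hence $W\subseteq V_3(G)\cup V_4(G)$, except for at most one vertex of degree $1$ (the low end of the $S$-edge). Each $w\in W_3$ has all neighbours in $V_\Delta(G)$, and each $w\in W_4$ has all neighbours of degree at least $\Delta-1$. Since $w\sim x$, we get $d_G(x)\ge \Delta-1\ge 9$. By (3) and (4), $|W|\ge d_G(x)-|U|\ge d_G(x)-4$.

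\textbf{Step 2 (counting $W_4$).} I will split into cases.

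\begin{itemize}
\item If $d_G(x)=\Delta-1$, then $W_3=\emptyset$, since a vertex of $W_3$ needs all neighbours in $V_\Delta(G)$. Also no degree-$1$ vertex lies in $W$, since its neighbour would have degree $\Delta$, not $d_G(x)$. Hence $|W_4|=|W|\ge 5$.
\item If $S\neq\emptyset$, then $V_\Delta(G)=\{u\}$ is a single vertex. A vertex of $W_3$ would need three distinct neighbours of degree $\Delta$, so again $W_3=\emptyset$, and $|W_4|\ge |W|-1\ge \Delta-4-1\ge 5$.
\item Otherwise $d_G(x)=\Delta$, $S=\emptyset$, and Claim~\ref{nw} gives $|W_3|\le 2$, so $|W_4|\ge |W|-2\ge 4$.
\end{itemize}

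\textbf{Step 3 (pigeonhole).} Each $w\in W_4$ is adjacent to $x$ and to exactly three vertices of $U$, because it has four distinct neighbours in $U\cup\{x\}$ and $|U|\le 4$. If $|U|\le 3$, then $U$ must be exactly three vertices, which is all of $U$. Each $w\in W_3$ is adjacent to $x$ and to two vertices of $U$.

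It suffices to find a pair $\{u_a,u_b\}\subseteq U$ with at least three vertices of $W$ adjacent to both. Those three vertices together with $\{x,u_a,u_b\}$ then span a $K_{3,3}$ in $B$.

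Double count incidences between vertices of $W$ and pairs in $\binom{U}{2}$. A vertex of $W_4$ covers $3$ pairs, and a vertex of $W_3$ covers $1$ pair.
\begin{itemize}
\item When $|U|=4$ there are $6$ pairs. With $|W_4|\ge 5$ the total is at least $15>12=2\cdot 6$. In the remaining case, $|W_4|\ge 4$ and $|W|\ge 6$ force $3|W_4|+|W_3|\ge 14>12$. Either way some pair is covered at least three times.
\item When $|U|=3$, every vertex of $W_4$ is adjacent to all of $U\cup\{x\}$, and $|W_4|\ge 3$ finishes the argument directly.
\end{itemize}

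The main obstacle I expect is Step 2: making sure that, in every combination of $d_G(x)\in\{\Delta-1,\Delta\}$ and $S$ empty or not, $W$ contains enough degree-$4$ vertices to make the double count exceed $2\binom{|U|}{2}$. The delicate case is $|U|=4$, $\Delta=10$, $|W_3|=2$, where the count $14>12$ is tight. I will also check the boundary case where the unique degree-$1$ vertex lies in $W$, confirming that it only costs one vertex.
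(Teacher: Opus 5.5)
Your proof is correct, but it reaches the $K_{3,3}$ by a more uniform route than the paper.

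\textbf{The paper's route.} The paper splits on $|W_4|$.
\begin{itemize}
\item When $|W_4|\ge 5$, it takes five vertices of $W_4$. The $15$ edges they send into $U$ force two vertices $u_1,u_2\in U$ each with at least four neighbours among them, and these two neighbourhoods meet in at least three vertices.
\item When $|W_4|\le 4$, it first shows $W=W_3\cup W_4$. It then pins down the extremal configuration exactly: $|W_3|=2$, $|W_4|=|U|=4$, $d_G(x)=\Delta=10$ and $N(x)=U\cup W$. Finally, it argues that if no pair in $U$ has three common neighbours in $W_4$, then every $u_i$ has exactly three neighbours in $W_4$. So the two $U$-neighbours $u_1,u_2$ of some $w_1\in W_3$ share exactly two $W_4$-neighbours, and these together with $w_1$ give the $K_{3,3}$.
\end{itemize}

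\textbf{Your route.} You split on $d_G(x)\in\{\Delta-1,\Delta\}$ and on whether $S=\emptyset$. From this you extract only the inequalities you need: either $|W_4|\ge 5$, or $|W_4|\ge 4$ together with $|W|\ge 6$ and $W=W_3\cup W_4$. You then run a single weighted double count of vertex--pair incidences, counting $3$ per vertex of $W_4$ and $1$ per vertex of $W_3$, against the bound $2\binom{4}{2}=12$. The paper's tight case becomes simply $14>12$, with no need to determine the exact degrees of the $u_i$ into $W_4$.

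\textbf{Comparison.} You also prove explicitly that $N(w)\subseteq U\cup\{x\}$ for every $w\in W$, so each $w\in W_4$ has exactly three neighbours in $U$. The paper uses this tacitly, both in the count $|E(U,W_4')|=15$ and in Claim~\ref{nw}. The paper's case analysis yields more structural information than the claim needs, whereas your count is shorter and treats all cases at once.

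\textbf{One presentational fix.} In the second bullet of your Step~2, the bound $|W|\ge\Delta-4$ uses $d_G(x)=\Delta$. State explicitly that this bullet assumes $d_G(x)=\Delta$, since the case $d_G(x)=\Delta-1$ is already covered by the first bullet.
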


\noindent {\it Proof of Claim \ref{K}.} Assume first that $|W_4|\ge 5$. Choose a subset  $W_4'\subseteq W_4$ with $|W_4'|=5$. Since $|E(U, W_4')|=3|W_4'|=15$ and $|U| \leq 4$,  there exist two vertices of $U$, say $u_1, u_2$, each adjacent to at least four vertices in $W_4'$.  Let $P=N(u_1)\cap W_4'$ and $Q=N(u_2)\cap W_4'$. Then $|P| \geq 4$ and $|Q| \geq 4$. Since $|W_4'| =5$, we have $|P \cap Q| \geq |P| + |Q| - |W_4'|\ge 4 + 4 - 5 = 3$. Consequently, the subgraph of $B$ induced by $E(P\cap Q, \{x, u_1, u_2\})$ contains a copy of $K_{3,3}$.

Now assume that $|W_4|\leq 4$. If $W\ne W_3\cup W_4$, then there exists a vertex $w\in W$ with $d_G(w)\le 2$, since every vertex in $W$ has degree at most $4$. Since $xw\in E(G)$ and $d_G(x)\le \Delta$, we have $d_G(x)+d_G(w)\le \Delta+2.$
Thus $xw\in S$. By Claim~\ref{cl: sumdegree}, this implies that $\Delta=10$, $V_\Delta(G)=\{x\}$, and $V_1(G)=\{w\}$.
Then all remaining neighbors in $W$ have degree $4$. Indeed, if there is a vertex $w'\in W$ with $2\leq d_G(w')\leq3$, then $w'$ has at least one neighbor other than $x$, say $y$.
Since $V_\Delta(G)=\{x\}$, we have $d_G(y)\leq\Delta-1$. Thus $w'y\in S$, which contradicts Claim~\ref{cl: sumdegree}, since $|S|\le 1$. Consequently, $|W_4|\ge d_G(x)-|U|-|V_1(G)|\ge 10-4-1=5,$ contradicting the assumption $|W_4|\le 4$. Hence $W=W_3\cup W_4$.

Combining conditions (2)-(4) with Claim~\ref{cl: sumdegree},
$d_G(x)\geq\Delta-1\geq9$; moreover, $d_G(x)=\Delta\geq10$ if $|W_3|\neq0$. We claim that $|W_3|=2$. If not, then $|W_4|\ge d_G(x)-|U|-|W_3|\ge5$, contradicting $|W_4|\le 4$.
So $d_G(x)=\Delta\geq10$. By condition (3),
$d_G(x)\leq|U|+|W_3|+|W_4|\le 4+2+4=10$. Therefore,
$d_G(x)=\Delta=10,|W_4|=4,|U|=4$, and $N_G(x)=U\cup W$.

Without loss of generality, assume that $W_3 = \{w_1, w_2\}$ and $\{u_1,u_2\}\subseteq  N(w_1)$.
If some distinct $u_i, u_j\in U$ have three common neighbors in $W_4$, then these neighbors together with $x, u_i, u_j$ form a copy of $K_{3,3}$.  Assume then that every pair of vertices in $U$ has at most two common neighbors in $W_4$. Since $|E(U, W_4)| =3|W_4|= 12$ and $|U| = 4$, it follows that $|E(\{u_i\}, W_4)|=3$ for each $u_i\in U$. Consequently, there are exactly two vertices in $W_4$, say $w_3, w_4$, adjacent to both $u_1$ and $u_2$. Hence, the subgraph of $B$ induced by $E(\{w_1,w_3,w_4\},\{x,u_1,u_2\})$ is a copy of $K_{3,3}$, completing the proof.
\hfill$\Box$
\medskip

\noindent {\bf The final step.} By Claim \ref{K}, let $Q=K_{3,3}$ be a subgraph in $B$. Recall that every vertex in $V(Q)\cap W$ has degree $3$ or $4$. 
For every edge
$e=uv\in E(Q)$, one end of $e$ lies in $W$ and has degree at most $4$ in $G$,
while the other end lies in $\{x\}\cup U$ and has degree at most $\Delta$ in
$G$. Since each vertex has degree $3$ inside the copy $Q$,
Eq.~(\ref{eq: fQ}) gives
\[
f_Q(e)
=
\Delta+1-\big(d_{\bar{Q}}(u)+d_{\bar{Q}}(v)\big)
\ge
\Delta+1-(\Delta-3)-(4-3)=3.
\]
Thus every half-edge of $Q$ has a remaining list of size at least $3$.
By Lemma~\ref{le: K33},  every signed copy of $K_{3,3}$ is
$3$-list-edge-colorable. Therefore $(Q,\sigma)$ is
$f_Q$-list-edge-colorable, contradicting Claim~\ref{cl: delete}.

This completes the proof of Theorem \ref{maintheorem}.
\hfill$\Box$

\end{document}